  \newcommand{\inn}[2]{\langle #1,#2\rangle}         
  \def\tr{\operatorname{Tr}}
  \def\inv{^{-1}}
  \def\Aut{\operatorname{Aut}}
  \newcommand{\cl}{\operatorname{cl}}
  \def\clsp{\operatorname {\overline{span}}}
\newcommand{\B}{\mathcal{B}}
\newcommand{\CC}{\mathbb{C}}
\newcommand{\FF}{\mathbb{F}}
\newcommand{\F}{\mathcal{F}}
\newcommand{\K}{\mathcal{K}}
\renewcommand{\l}{\ell}
\newcommand{\NN}{\mathbb{N}}
\renewcommand{\O}{\mathcal{O}}
\newcommand{\RR}{\mathbb{R}}
\renewcommand{\S}{\mathcal{S}}
\newcommand{\TT}{\mathbb{T}}
\newcommand{\T}{\mathcal{T}}
\newcommand{\ZZ}{\mathbb{Z}}
\newcommand{\e}{\varepsilon}
\newtheorem{thm}{Theorem}[section]
\newtheorem{prop}[thm]{Proposition}
\newtheorem{lem}[thm]{Lemma}
\newtheorem{cor}[thm]{Corollary}
\theoremstyle{definition}
\newtheorem{definition}[thm]{Definition}
\theoremstyle{remark}
\newtheorem{remark}[thm]{Remark}
\newcommand{\thmref}[1]{Theorem~\ref{#1}}
\newcommand{\proref}[1]{Proposition~\ref{#1}}
\newcommand{\lemref}[1]{Lemma~\ref{#1}}
\newcommand{\corref}[1]{Corollary~\ref{#1}}
\numberwithin{equation}{section}
\begin{document}

\title[Equilibrium states and growth of quasi-lattice ordered monoids]{Equilibrium states and growth \\ of quasi-lattice ordered monoids}

\date{21 September 2017}

\author[Bruce]{Chris Bruce}
\address[Chris Bruce]{
Department of Mathematics and Statistics\\
University of Victoria\\
Victoria, BC V8W 3R4\\
Canada}%
\email[Bruce]{chrismbruce1@gmail.com}
\author[Laca]{Marcelo Laca}
\address[Marcelo Laca]{
Department of Mathematics and Statistics\\
University of Victoria\\
Victoria, BC V8W 3R4\\
Canada}%
\email[Laca]{laca@uvic.ca}
%
\author[Ramagge]{Jacqui Ramagge}
\address[Jacqui Ramagge]{
School of Mathematics and Statistics\\
Sydney University\\
NSW 2006\\
Australia}%
\email[Ramagge]{jacqui.rammage@sydney.edu.au}
\author[Sims]{Aidan Sims}
\address[Aidan Sims]{
School of Mathematics and Applied Statistics\\
University of Wollongong\\
NSW 2522\\
Australia}%
\email[Sims]{asims@uow.edu.au}

\subjclass[2010]{46L10 (primary); 46L05, (secondary).} \keywords{KMS states;
quasi-lattice ordered group, Artin monoid}
\thanks{This research was supported by the Natural Sciences
and Engineering Research Council of Canada and Australian Research Council grants
DP150101595 and DP170101821. Part of this work was completed while M.L. and A.S. were
attending the MATRIX@Melbourne Research Program \emph{Refining $C^*$-algebraic invariants
for dynamics using $KK$-theory, July 18--29 2016}.}

\begin{abstract}
Each multiplicative real-valued homomorphism on a quasi-lattice ordered monoid gives rise
to a quasi-periodic dynamics on the associated Toeplitz C*-algebra; here we study the KMS
equilibrium states of the resulting C*-dynamical system. We show that, under a
nondegeneracy assumption on the homomorphism, there is a critical inverse temperature
$\beta_c$ such that at each inverse temperature $\beta \geq \beta_c$ there exists a
unique KMS state. Strictly above $\beta_c$, the KMS states are generalised Gibbs states
with density operators determined by analytic extension to the upper half-plane of the
unitaries implementing the dynamics. These are faithful Type~I states. The critical value
$\beta_c$ is the largest real pole of the partition function of the system and is related
to the clique polynomial and skew-growth function of the monoid, relative to the degree
map given by the logarithm of the multiplicative homomorphism. Motivated by the study of
equilibrium states, we give a proof of the inversion formula for the growth series of a
quasi-lattice ordered monoid in terms of the clique polynomial as in recent work of
Albenque--Nadeau and McMullen for the finitely generated case, and in terms of the
skew-growth series as in recent work of Saito. Specifically, we show that $e^{-\beta_c}$
is the smallest pole of the growth series and thus is the smallest positive real root of
the clique polynomial. We use this to show that equilibrium states in the subcritical
range can only occur at inverse temperatures that correspond to roots of the clique
polynomial in the interval $(e^{-\beta_c},1)$, but we are not aware of any examples in
which such roots exist.
\end{abstract}

\maketitle
\section*{Introduction}
When $C^*$-algebras are used to model systems in quantum statistical mechanics, the time
evolution of the system is modelled as a continuous action of the additive group of real
numbers. Equilibrium states of the system at inverse temperature $\beta$ are modelled by
states satisfying a twisted trace-like condition formalised by Haag, Hugenholtz  and
Winnink \cite{HHW}, and called the \emph{KMS condition} in honour of Kubo, Martin and
Schwinger. Even when a given action $\alpha$ of the reals on a $C^*$-algebra $A$ does not
represent a physical system, the associated simplices of KMS states at various inverse
temperatures turn out to be a very interesting invariant of the pair $(A, \alpha)$. For
example, for the periodic gauge action on a Cuntz--Krieger algebra, they encode the
topological entropy and Perron-Frobenius measure of the underlying shift of finite type
\cite{EFW}; and for the Bost--Connes system they famously encode the Riemann zeta
function as partition function and exhibit a phase transition related to explicit class
field theory \cite{BC,CMR,LLN}. As a result there has been very significant interest in
the KMS structure of $C^*$-dynamical systems in recent years, particularly those related
to combinatorial and algebraic objects such as graphs \cite{CL, Thomsen}, group actions
\cite{LRRW} and semigroups \cite{BaHLR, LR10}.

This paper explores the theme arising in the work of Enomoto--Fujii--Watatani \cite{EFW}
on the relationship between KMS states on Cuntz--Krieger algebras and the entropy of the
underlying shift space. The topological entropy can be regarded as measuring the
asymptotic complexity of the shift space; it measures, roughly speaking, the growth rate,
with respect to $n$, of the number of allowed blocks of length $n$ in the shift space.
Analogously, we might expect that for an appropriate definition of the $C^*$-algebra of a
monoid, and for a dynamics on this $C^*$-algebra encoded by an appropriate length
function on the monoid itself, the resulting KMS data should reflect the growth rate of
the semigroup relative to the given presentation and length.

Here we focus our investigation on the positive cones of quasi-lattice ordered groups.
Recall that $(G,P)$ is quasi-lattice ordered if $G$ is a group with a distinguished
submonoid  $P$ such that in the left order, defined by $x\leq y$ iff $x\inv y \in P$, any
finite collection of elements with a common upper bound has a unique least common upper
bound. The Toeplitz C*-algebra $C^*_\lambda(P)$ is generated by the translation operators
$L_p:\e_x\mapsto \e_{px}$ on $\l^2(P)$. We also consider the universal C*-algebra
$C^*(P)$ for representations of $P$ by isometries that satisfy the Nica-covariance
relations, \cite{N92,LR96}. A multiplicative map  $N : P \to (0,\infty)$ from the monoid
to the positive reals determines a quasi-periodic time evolution on $C^*_\lambda(P)$
given on generators by $\alpha_t(L_p )= N(p)^{it} L_p$, and there is also a corresponding
time evolution in $C^*(P)$. We assume that this homomorphism satisfies conditions that
ensure its logarithm is a degree map in the sense of \cite{S3} and study the KMS states
of the associated $C^*$-dynamical system, establishing a relationship between equilibrium
temperatures and rate of growth of the monoid.

In Section~\ref{sec:main} we describe the semigroups and $C^*$-dynamical systems
$(C^*_\lambda(P), \alpha^N)$ that we study throughout the paper. Following the strategy
laid out in \cite{LR10}, we next establish, in Section~\ref{sec:KMSchar}, an algebraic
characterisation of those states that are KMS$_\beta$ states for a given $\beta$. In
Section~\ref{sec:criticalGibbs} we identify the partition function of the system as a
Dirichlet series~\eqref{tracezeta} whose abscissa of convergence is the critical inverse
temperature $\beta_c$.  For every inverse temperature $\beta > \beta_c$ we construct a
natural generalised Gibbs state and we show that it is the unique KMS$_\beta$-state and
that its GNS representation is a faithful, Type~I$_\infty$ representation. The system
also has a unique KMS$_{\beta_c}$ state and a unique ground state, which is a
KMS$_\infty$ state whenever $\beta_c<\infty$. The critical value $\beta_c$ can be zero,
but only if $P$ is lattice ordered. In Section~\ref{sec:growthrate}, we investigate the
relationship between KMS states and the growth properties of $P$. We show that the
projection $Q_e$ onto the vacuum vector $\e_e$ can be expressed as an operator-valued
Euler-type product, which expands to an operator-valued analogue of the clique polynomial
\cite{AN09,Mc13} and of the skew-growth series \cite{S3}. When we evaluate the
KMS$_\beta$ states for $\beta > \beta_c$ on $Q_e$ through this expansion, and use the KMS
condition, we obtain the inversion formula for the growth series from
\cite[Remark~5.5]{S3}. We finish by showing that, for finitely generated quasi-lattice
orders, any subcritical KMS states must occur at roots of the clique polynomial  and
factor through the boundary quotient of \cite{CL07}. We have not been able to determine
whether any such sporadic states do occur. Indeed, Saito \cite{S3} poses the question of
whether there exists a finitely generated quasi-lattice ordered monoid and a weight
function satisfying his hypotheses such that that the clique polynomial admits any roots
between its smallest root and 1. Our results show that establishing the existence of a
subcritical KMS state would answer this question in the affirmative.

\noindent{\sl Acknowledgment:} This project was initiated during a visit of C.B. to
Wollongong and continued through visits of M.L. to  Sydney and to Wollongong. Both C.B
and M.L. would like to acknowledge this and thank the mathematics departments at
Wollongong and Sydney for their welcoming hospitality.

\section{Quasi-periodic dynamics on the \texorpdfstring{$C^*$}{C*}-algebras of quasi-lattice orders.}
\label{sec:main}
Let  $G$ be a group and suppose $P\subseteq G$ is a submonoid with $P\cap P^{-1}=\{e\}$.
The relation $\leq$ on $G$ defined by $x\le y$ if $x^{-1}y\in P$ is a (left)
translation-invariant partial order on $G$. Following Nica \cite{N92} we say that the
partially ordered group $(G,P)$ is \textit{quasi-lattice ordered} if every finite subset
$F$ of $G$ that has an upper bound has a least upper bound in $G$. The least upper bound
is necessarily unique, and is denoted by $\vee F$, or simply $x\vee y$ if $F = \{x,y\}$.
For such $(G,P)$ we will also say that $\leq$ is a quasi-lattice order on $G$ (or on
$P$), and we often and somewhat loosely refer to $P$ as a quasi-lattice ordered monoid.
We follow \cite{CL07} in that the partial order is defined on all of $G$ and we do not
require $x\vee y$ to be in $P$. When $x$ and $y$ do not have a common upper bound we
extend the notation by saying $x\vee y =\infty$.

\begin{lem}
\label{A1L1} Let $(G,P)$ be a quasi-lattice ordered group and suppose $x,y,z\in G$. Then $x$ and $y$ have a common upper bound  iff $zx$ and $zy$ have a common upper bound,
in which case $zx\vee zy=z(x\vee y)$.
\end{lem}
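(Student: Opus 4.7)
The plan is to deduce the statement directly from the defining property of $\leq$, namely left translation invariance. Concretely, by definition $a\leq b$ iff $a^{-1}b\in P$, so for any $g\in G$ we have $ga\leq gb$ iff $(ga)^{-1}(gb)=a^{-1}b\in P$ iff $a\leq b$. In particular, multiplication on the left by $z$ (or by $z^{-1}$) is an order isomorphism of $G$ onto itself.

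First I would prove the ``iff'' part. If $w\in G$ is a common upper bound for $zx$ and $zy$, then applying left translation by $z^{-1}$ and using invariance yields $z^{-1}w\geq x$ and $z^{-1}w\geq y$, so $z^{-1}w$ is a common upper bound for $x$ and $y$. The reverse implication is symmetric: if $u$ is a common upper bound for $x$ and $y$, then $zu$ is one for $zx$ and $zy$.

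For the equality, assume the common upper bounds exist and let $u:=x\vee y$. Left translation invariance gives $zu\geq zx$ and $zu\geq zy$, so $zu$ is a common upper bound for $zx$ and $zy$. If $w$ is any other common upper bound for $zx$ and $zy$, then by the first part $z^{-1}w$ is a common upper bound for $x$ and $y$, and the definition of the least upper bound gives $u\leq z^{-1}w$; translating again yields $zu\leq w$. Hence $zu$ is the least common upper bound of $zx$ and $zy$, i.e. $zx\vee zy=z(x\vee y)$.

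There is no real obstacle here: the whole content of the lemma is that $g\mapsto zg$ is an order automorphism of $G$, which is immediate from the formula $(zx)^{-1}(zy)=x^{-1}y$ and the definition of $\leq$. The only mild subtlety is to remember that upper bounds are taken in $G$ rather than in $P$, as emphasised in the paragraph preceding the lemma, so no positivity issue arises when we pass from $w$ to $z^{-1}w$.
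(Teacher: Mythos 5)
Your proof is correct and follows essentially the same route as the paper: both arguments rest on the left translation invariance of the order to transfer common upper bounds back and forth via $z$ and $z^{-1}$, and then identify $z(x\vee y)$ with $zx\vee zy$ using the least upper bound property. The only cosmetic difference is that you verify the universal property of $zx\vee zy$ directly, while the paper sandwiches $z(x\vee y)$ between $zx,zy$ and $zx\vee zy$; the content is the same.
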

\begin{proof}
Suppose $x$ and $y$ have a common upper bound; then $zx,zy\leq z(x\vee y)$, so  $zx$ and $zy$ have
a common upper bound. The converse follows on switching from $x,y,z$ to $zx, zy, z\inv$.
 To prove the equality, notice that  $zx,zy\leq zx\vee zy$ implies that $x,y\leq z^{-1}(zx\vee zy)$,
so $x\vee y\leq z^{-1}(zx\vee zy)$. Thus we have $zx,zy\leq z(x\vee y)\leq zx\vee zy$ and since $zx\vee zy$
is the least common upper bound of $zx$ and $zy$
we must have $z(x\vee y)= zx\vee zy$.
\end{proof}

Let $(G,P)$ be a quasi-lattice ordered group and let $\{\e_x : x\in P\}$ be the canonical
orthonormal basis for $\l^2(P)$. For each $p\in P$, the map $L_p:\e_x\mapsto \e_{px}$
extends to an isometry on $\l^2(P)$. This gives a representation $L:P \to \B(\ell^2(P))$
by isometries, called the {\em left regular representation}. The reduced  $C^*$-algebra
$C^*_\lambda(P)$ is the $C^*$-algebra generated by the image of the left regular
representation:
\[
    C_\lambda^*(P):=C^*(\{L_p : p\in P\})\subseteq\B(\l^2(P)).
\]
A fundamental observation made by Nica in \cite{N92} is that, in addition to the obvious
multiplicativity, the isometries in the left regular representation also satisfy the relations
\begin{equation}\label{eq-proj-nica}
L_p L^*_p L_q L^*_q
    = \begin{cases}
        L_{p \vee q} L^*_{p \vee q} &\text{ if $p \vee q < \infty$} \\
        0 &\text{ otherwise,}
    \end{cases}
\end{equation}
which we refer to as Nica-covariance. As a notational convenience we will write $L_\infty
= 0$ so~\eqref{eq-proj-nica} becomes $L_p L^*_p L_q L^*_q = L_{p \vee q} L^*_{p \vee q}$
for all $p,q\in P$.

This observation led Nica to define the (full) $C^*$-algebra of $P$ to be the universal unital $C^*$-algebra
$C^*(P)$ with presentation
\begin{equation}\label{fullpresentation} \left\langle\{v_p\}_{p\in P} :  v_p^*v_p = 1, \  v_pv_q =v_{pq},  \text{ \  } v_pv^*_p v_q v^*_q  = v_{p\vee q}v^*_{p \vee q}, \text{ \  }p,q \in P\right\rangle
\end{equation}
where again $v_{\infty}=0$ by convention. Since the $v_p$ are isometries, multiplying the
relation $v_{p\vee q}v^*_{p \vee q}$ on the left by $v_p^*$ and on the right by $v_q$
gives
\begin{equation}\label{eq-nicacovariance}
    v_p^*v_q = v_{p^{-1}(p\vee q)}v_{q^{-1}(p\vee q)}^* \quad\text{ for all $p,q \in P$.}
\end{equation}
This gives a Wick-ordering of products and implies that the set of products of the form
$v_pv_q^*$ is closed under multiplication and adjoints, hence
\begin{equation}\label{eq:spanning set}
C^*(P) = \overline{\operatorname{span}}\{v_p v^*_q : p,q \in P\}.
\end{equation}
For the details of these constructions, in particular the existence of the universal
$C^*$-algebra and its relationship with the reduced $C^*$-algebra of $P$, see
\cite{N92,LR96}. For more  on the theory of Toeplitz algebras of monoids, see also
\cite{CL02,CL07,Li1,Li2}.

Every multiplicative map $N:P \to (0,\infty)$ on a quasi-lattice ordered monoid gives rise to
a time evolution on $C^*(P)$
for which the generating isometries $v_p$ are eigenvectors.

\begin{prop}\label{dynamicsdefinition}
Let $(G,P)$ be a quasi-lattice ordered group and suppose $N:P \to (0,\infty)$ is a
multiplicative map. Then there exists a strongly continuous one-parameter group
$\{\alpha_t\}_{t\in\RR}$ of automorphisms of $C^*(P)$
 satisfying
\[\alpha_t(v_p)=N(p)^{it}v_p \qquad \text{ for all $p\in P$ and $t\in\RR$.}\]
\end{prop}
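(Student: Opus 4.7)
The plan is to apply the universal property of $C^*(P)$. For each fixed $t \in \RR$, set $w_p := N(p)^{it} v_p$ and verify that the family $\{w_p\}_{p\in P}$ satisfies the three defining relations in \eqref{fullpresentation}. Since $|N(p)^{it}|=1$, we have $w_p^* w_p = v_p^* v_p = 1$; multiplicativity of $N$ gives $w_p w_q = N(p)^{it}N(q)^{it} v_{pq} = N(pq)^{it} v_{pq} = w_{pq}$; and $w_p w_p^* = |N(p)^{it}|^2 v_p v_p^* = v_p v_p^*$, so Nica-covariance for the $w_p$ reduces instantly to that for the $v_p$. Universality of $C^*(P)$ therefore yields a unital $*$-homomorphism $\alpha_t : C^*(P) \to C^*(P)$ with $\alpha_t(v_p) = w_p$.

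For the group law, one checks on generators that $\alpha_0(v_p) = v_p$ and $\alpha_s(\alpha_t(v_p)) = N(p)^{is} N(p)^{it} v_p = N(p)^{i(s+t)} v_p = \alpha_{s+t}(v_p)$; both $\alpha_s \circ \alpha_t$ and $\alpha_{s+t}$ are $*$-homomorphisms agreeing on the generating set $\{v_p : p \in P\}$, so they coincide on all of $C^*(P)$. In particular $\alpha_{-t}$ is a two-sided inverse of $\alpha_t$, making each $\alpha_t$ an automorphism.

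For strong continuity, the spanning result \eqref{eq:spanning set} shows that the $*$-subalgebra of finite linear combinations of monomials $v_p v_q^*$ is norm-dense in $C^*(P)$. On each such monomial $\alpha_t(v_p v_q^*) = N(p)^{it} N(q)^{-it} v_p v_q^*$ depends norm-continuously on $t \in \RR$, so by linearity $t \mapsto \alpha_t(a)$ is norm-continuous for every $a$ in the dense subalgebra. Since each $\alpha_t$ is a $*$-homomorphism and hence contractive, the standard $\varepsilon/3$ argument promotes pointwise norm-continuity from the dense subalgebra to all of $C^*(P)$.

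The only point requiring any thought is the Nica-covariance check, and even that reduces immediately to the observation that $|N(p)^{it}|^2 = 1$ leaves the range projections $v_p v_p^*$ untouched by the rescaling $v_p \mapsto w_p$; the rest is a routine application of universality together with the standard density-and-contractivity argument for strong continuity.
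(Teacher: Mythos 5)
Your proof is correct and follows essentially the same route as the paper: apply the universal property of $C^*(P)$ to the rescaled isometries $N(p)^{it}v_p$, check the group law on generators, and use the standard $\varepsilon/3$ density-and-contractivity argument for strong continuity. You simply spell out the details (including that the modulus-one scalars leave all range projections, and hence Nica-covariance, untouched) that the paper leaves as routine.
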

\begin{proof}
For each $t\in \RR$,  the collection $\{N(p)^{it}v_p: p\in P\}$ consists  of isometries
that also satisfy the defining relations in the presentation \eqref{fullpresentation}. So
the universal property of $C^*(P)$ gives homomorphisms $\alpha_t: C^*(P) \to C^*(P)$ that satisfy
$\alpha_s\alpha_t = \alpha_{s+t}$ and have  inverses $\alpha_{-t}$, so they are
automorphisms. Continuity of the maps $t\mapsto \sigma_t(a)$ for each $a\in C^*(P)$
follows from a standard ``$\epsilon/3$~argument".
\end{proof}

\begin{remark}
The automorphism group $\alpha_t^N$ from \proref{dynamicsdefinition} descends to a
spatially implemented dynamics on $C^*_\lambda(P)$. Indeed,  for each $t\in \RR$ the map
$U_t:\e_x\mapsto N(x)^{it}\e_x$ for $x\in P$ extends to a unitary operator on
$\ell^2(P)$. A routine argument shows that $\{U_t: t\in \RR\}$ is a strongly continuous
one-parameter unitary group on $\l^2(P)$, and the left regular representation $L: P\to
\mathcal B(\ell^2(P))$ satisfies
\[
 U_t L_p U_t^* = N(p)^{it} L_p  \qquad (p\in P) \ \  (t\in \RR).
\]
We continue to write $\alpha^N$ for this dynamics on $C^*_\lambda(P)$.
\end{remark}

\begin{remark}
If the group $G$ is abelian, the dynamics $\alpha^N$ is a continuous one-parameter
subgroup of the canonical dual action of the compact group $\hat G$ on $C^*(P)$, and is
quasi-periodic in the usual sense of converging averages, see e.g. \cite[Corollary
8]{La98}. In general, when $G$ is nonabelian, there is a coaction of $G$ on $C^*(P)$, see
e.g. \cite[Section 6]{LR96}, and thus also a corresponding quotient coaction of the
(abelian) multiplicative subgroup of $\mathbb{R}^*_+$ generated by $N(P)$.
\end{remark}

\section{Characterisation of KMS states\texorpdfstring{ of $C_\lambda^*(P)$}{}} \label{sec:KMSchar} Let $A$ be a
$C^*$-algebra with a time evolution $\alpha: \RR \to \Aut(A)$. We say that $a \in A$ is
analytic for $\alpha$ if $t \mapsto \alpha_t(a)$ extends to an entire function $z \mapsto
\alpha_z(a)$ from $\CC$ to $A$. Let $\beta\in \RR$. By definition
\cite[Definition~5.3.1]{BR}, a state $\varphi$ of $A$ is KMS$_\beta$ for $\alpha$ if
\begin{equation}\label{KMSdefinition}
\varphi(ab) = \varphi(b \alpha_{i\beta} (a))
\end{equation}
for all $a,b$ in some dense $\alpha$-invariant $^*$-subalgebra of the analytic elements
of $A$. When $(A,\alpha)$ represents a quantum dynamical system, these are the
equilibrium states of $(A,\alpha)$ at inverse temperature $\beta$. If $t \mapsto
\alpha_t(a)$ and $t \mapsto \alpha_t(b)$ have analytic extensions, then $z \mapsto
\alpha_z(a) + \lambda \alpha_z(b)$ is an analytic extension of $t \mapsto \alpha_t(a +
\lambda b)$. So uniqueness of analytic extensions shows that the KMS condition passes
from a given set of analytic elements to its linear span. Hence, to show that a state is
KMS$_\beta$, it suffices to prove that~\eqref{KMSdefinition} holds for every $a$ and $b$
in an $\alpha$-invariant subset of analytic elements of $A$ whose linear span is a dense
$^*$-subalgebra of $A$. The KMS$_0$ states are defined to be the $\alpha$-invariant
traces on $A$.

We are interested in computing KMS states of $C^*(P)$ for the quasi-periodic dynamics
associated to a multiplicative homomorphism $N:P\to(0,\infty)$. Since the function $z
\mapsto (N_p/N_q)^{iz}v_pv_{q}^*$ is a $C^*$-algebra valued entire function for every $p,
q \in P$, the elements $v_pv_{q}^*$ are analytic; and they span an $\alpha$-invariant
dense $^*$-subalgebra of $C^*(P)$ by~\eqref{eq:spanning set}. Thus, we will carry out our
computations on this spanning set.

We begin by showing that if we are interested in studying
KMS states, then the range of $N$ has to lie either entirely within $[1,\infty)$ or entirely within $(0,1]$.
\begin{prop}\label{tempchoice}
Assume there exists a KMS$_\beta$ state of $(C^*(P), \alpha^N)$.
\begin{itemize}
\item[(a)] if $\beta>0$, then  $N(P) \subset [1,\infty)$ and
\item[(b)] if $\beta <0$, then $N(P) \subset (0,1]$.
\end{itemize}\end{prop}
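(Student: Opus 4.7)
The plan is to evaluate the KMS condition on the pair $(v_p^*, v_p)$ for an arbitrary $p \in P$ and exploit the fact that $v_p v_p^*$ is a projection, so its value under any state lies in $[0,1]$.

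First I would note that, since $\alpha_t(v_p) = N(p)^{it} v_p$ and $\alpha_t(v_p^*) = N(p)^{-it} v_p^*$, the maps $t \mapsto \alpha_t(v_p^{\pm})$ extend to entire functions with $\alpha_{i\beta}(v_p^*) = N(p)^{\beta} v_p^*$. Thus $v_p$ and $v_p^*$ are analytic elements of the $*$-algebra spanned by the $v_pv_q^*$, so the KMS$_\beta$ condition applies to them.

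Applying \eqref{KMSdefinition} with $a = v_p^*$ and $b = v_p$, and using $v_p^* v_p = 1$, yields
\begin{equation*}
1 = \varphi(1) = \varphi(v_p^* v_p) = \varphi\bigl(v_p \, \alpha_{i\beta}(v_p^*)\bigr) = N(p)^{\beta}\, \varphi(v_p v_p^*).
\end{equation*}
Hence $\varphi(v_p v_p^*) = N(p)^{-\beta}$. Because $v_p$ is an isometry, $v_p v_p^*$ is a projection, and therefore $0 \leq \varphi(v_p v_p^*) \leq 1$, giving $N(p)^{-\beta} \leq 1$.

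From this inequality the conclusion is immediate: if $\beta > 0$ then $N(p)^{-\beta} \leq 1$ forces $N(p) \geq 1$, proving (a); if $\beta < 0$ then it forces $N(p) \leq 1$, proving (b). There is no real obstacle here; the only point requiring care is ensuring that $v_p$ and $v_p^*$ are among the analytic elements to which the KMS identity applies, which was already noted in the discussion preceding the proposition.
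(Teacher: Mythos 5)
Your proof is correct and follows essentially the same route as the paper: both compute $\varphi(v_p v_p^*) = N(p)^{-\beta}$ from the KMS condition (you apply it with $a = v_p^*$, $b = v_p$, the paper with the roles swapped, which is an immaterial difference) and then conclude $N(p)^{-\beta} \leq 1$ from positivity of $\varphi$ on the projection $1 - v_pv_p^*$.
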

\begin{proof}
Let $\varphi$ be a KMS$_\beta$ state and let $p\in P$. Since $v_p$ is an isometry,
\[
 0 \leq \varphi(1 -  v_pv_{p}^* ) = 1 - \varphi(v_pv_{p}^*) = 1 - N(p)^{-\beta} \varphi(v_{p}^*v_p) = 1 - N(p)^{-\beta}.
\]
Hence $ N(p)^{-\beta} \leq 1$, from which (a) and (b) follow.
\end{proof}

Next we obtain a characterisation of KMS states of $C^*(P)$ in terms of their values on
the spanning elements. To include the case of infinitely generated $P$ we need to assume
that $\inf \{N(p): p \in P\setminus\{ e\}\} >1$; this is automatically satisfied when $P$
is generated by a finite set $\S \subset P\setminus \{e\}$ and $N(s) >1$ for every $s\in
\S$.

\begin{prop}
\label{kmschar} Let $(G,P)$ be a quasi-lattice ordered group,  let $N:P\to [1,\infty)$ be
a multiplicative map such that $\inf\{N(p): p\in P \setminus\{e\}\}
> 1$, and let $\alpha^N$ be the associated dynamics on $C^*(P)$. Suppose that $\beta\in
(0,\infty)$. A state $\varphi$ is a KMS$_\beta$ state for $\alpha^N$ if and only if for every $p,q\in P$,
\begin{equation}\label{KMSformula}
 \varphi(v_pv_{q}^*)=\begin{cases} N(p)^{-\beta} & \text{ if } p=q \\
 0 & \text{ otherwise.}
 \end{cases}
\end{equation}
In particular, for each $\beta > 0$ there is at most one KMS$_\beta$-state for
$\alpha^N$.
\end{prop}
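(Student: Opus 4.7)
The plan is to verify both directions on the $^*$-invariant spanning set $\{v_p v_q^* : p, q \in P\}$ from~\eqref{eq:spanning set}, which consists of analytic elements for $\alpha^N$. The final uniqueness statement is then immediate because values on a dense $^*$-subalgebra determine a state by continuity.

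For the forward direction, assume $\varphi$ is KMS$_\beta$. The case $p = q$ is immediate from the KMS identity and $v_p^* v_p = 1$. For $p \neq q$, I would first observe that $\varphi(v_p) = 0$ for every $p \in P \setminus \{e\}$: the KMS identity applied to $v_p \cdot 1$ gives $\varphi(v_p) = N(p)^{-\beta} \varphi(v_p)$, and $N(p) > 1$ with $\beta > 0$ makes the multiplier strictly less than one. Then for general $p \neq q$, one KMS step yields $\varphi(v_p v_q^*) = N(p)^{-\beta} \varphi(v_q^* v_p)$, and Nica covariance~\eqref{eq-nicacovariance} replaces $v_q^* v_p$ by $v_a v_b^*$ with $a = q^{-1}(p \vee q)$, $b = p^{-1}(p \vee q)$ (or by zero when $p \vee q = \infty$). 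The cases $p \vee q = \infty$ and $a = e$ or $b = e$ are immediate from the preceding observations. Otherwise one iterates; the hypothesis $c := \inf\{N(p) : p \neq e\} > 1$ produces at each stage a contraction factor bounded by $c^{-\beta}$, so either the iteration terminates in a resolved case or the estimate $|\varphi(v_p v_q^*)| \leq c^{-n\beta}$, valid for arbitrarily large $n$, forces the value to be zero.

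For the reverse direction, suppose $\varphi$ is a state satisfying~\eqref{KMSformula}. I would verify $\varphi(v_p v_q^* \cdot v_s v_t^*) = (N(p)/N(q))^{-\beta} \varphi(v_s v_t^* \cdot v_p v_q^*)$ for all $p, q, s, t \in P$. Wick-ordering via~\eqref{eq-nicacovariance} rewrites the left product as $v_{pc} v_{td}^*$ with $qc = sd = q \vee s$ (or as zero when $q \vee s = \infty$), and the right product symmetrically as $v_{sa} v_{qb}^*$ with $ta = pb = t \vee p$. Applying~\eqref{KMSformula} reduces the KMS identity to (i) both sides vanishing simultaneously and (ii) the scalar factors agreeing when both sides are nonzero.

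For (i), the equality $sa = qb$ exhibits a common upper bound of $s$ and $q$, forcing $q \vee s < \infty$ whenever the right-hand side is nonzero, and symmetrically for the left. For (ii), set $u := q \vee s$, $w := t \vee p$, $v := pc = td$, and $v' := sa = qb$. Then $v \geq p, t$ gives $v \geq w$; writing $v = wx$ and similarly $v' = uy$ with $x, y \in P$, the identities $v = ts^{-1}u$ and $v' = st^{-1}w$ in $G$ force $xy = e$, and so $x = y = e$ by $P \cap P^{-1} = \{e\}$. Hence $pc = w$ and $sa = u$, and $pc = pb = w$ gives $c = b$. The required identity $N(pc) N(q) = N(p) N(sa)$ then follows from the multiplicative relations $N(u) = N(q)N(c)$ and $N(w) = N(p)N(b) = N(p)N(c)$. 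The main obstacle will be this last rigidity step: identifying $pc$ with $t \vee p$ is what makes the numerical KMS identity hold, and it is the one place where the defining axiom $P \cap P^{-1} = \{e\}$ is genuinely used.
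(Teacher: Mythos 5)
Your argument is correct in substance, and its forward direction is essentially the paper's proof: the only difference is that you dispose of the case in which an index becomes $e$ via the preliminary observation $\varphi(v_r)=0$ for $r\neq e$, while the paper folds that case into the iteration by always moving the non-identity factor, using $|\varphi(v_pv_q^*)|=|\varphi(v_qv_p^*)|$; either way the hypothesis $\inf\{N(p):p\neq e\}>1$ supplies the contraction factor that kills $\varphi(v_pv_q^*)$ for $p\neq q$. In the converse direction your overall strategy is also the paper's (check the KMS identity on products of two spanning elements, Wick-order by \eqref{eq-nicacovariance}, apply \eqref{KMSformula}), but the execution differs: the paper first observes that $\varphi(v_sv_t^*v_pv_q^*)\neq 0$ forces that product to be self-adjoint, reducing matters to the symmetric identity \eqref{KMSmonomials}, and then identifies $p_1q_1^{-1}(q_1\vee p_2)=p_1\vee q_2$ by Lemma~\ref{A1L1} together with the group relation $p_1q_1^{-1}=q_2p_2^{-1}$; you keep the asymmetric identity and instead prove the corresponding rigidity ($pc=t\vee p$, $sa=q\vee s$, hence $b=c$) from cancellation in $G$ and $P\cap P^{-1}=\{e\}$. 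Your rigidity computation is valid and in effect re-derives the consequence of Lemma~\ref{A1L1} that the paper invokes, so this is a legitimate self-contained variant.

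There is, however, one incomplete step, namely your case (i). Nonvanishing of the right-hand side means $t\vee p<\infty$ \emph{and} $sa=qb$; your common-upper-bound remark then gives $q\vee s<\infty$, but for the left-hand side to be nonzero you must also verify the diagonal condition $pc=td$, which is not addressed (nor is the converse implication). The gap is real but is filled by identities you already wrote down: with $u=q\vee s$ and $w=t\vee p$, the equality $sa=qb$ reads $st^{-1}w=qp^{-1}w$, so $st^{-1}=qp^{-1}$, equivalently $pq^{-1}=ts^{-1}$, whence $pc=pq^{-1}u=ts^{-1}u=td$; symmetrically, if the left-hand side is nonzero then $pc=td$ is a common upper bound of $p$ and $t$, so $w<\infty$, and $pq^{-1}=ts^{-1}$ gives $sa=st^{-1}w=qp^{-1}w=qb$. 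This is precisely the case-matching the paper carries out between \eqref{p1q1p2q2} and \eqref{p2q2p1q1}. With that sentence added, your proof is complete, including the uniqueness assertion, which follows as you say from \eqref{eq:spanning set} and continuity.
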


\begin{proof}  Suppose first $\varphi$ is a KMS$_\beta$ state for $\alpha^N$ and let $p,q \in P$.
 When $p=q$ the KMS condition implies $\varphi(v_pv_q^*)=N(p)^{-\beta}\varphi(v_p^*v_p)=N(p)^{-\beta}$.
It remains to prove that $\varphi(v_pv_{q}^*) =0$ when $p \neq q$. Since $p \not= q$, we
can write $\{p,q\} = \{p_0, q_0\}$ with $p_0 \not= e$. Using first that
\begin{equation}\label{TakeAdjoint}
    |\varphi(v_p v^*_q)| = |\overline{\varphi((v_p v^*_q)^*)}| = |\varphi(v_q v^*_p)|,
\end{equation}
and then second the KMS condition, we see that
\begin{equation} \label{KMSCond}
    |\varphi(v_p v^*_q)| = |\varphi(v_{p_0} v^*_{q_0})| = N(p_0)^{-\beta } |\varphi(v_{q_0}^* v_{p_0})|.
\end{equation}

If $q_0 \vee p_0= \infty$, then  Nica-covariance gives $v_{q_0}^* v_{p_0}=0$ and we are
done. If $q_0 \vee p_0< \infty$, then Nica-covariance gives $v_{q_0}^*v_{p_0} =
v_{{q_0}^{-1} (p_0\vee q_0)} v_{p_0^{-1} (p_0 \vee q_0)}^*$. Since $q_0 \not= p_0$,
cancellation in $P$ ensures that $q_0^{-1} (p_0 \vee q_0) \not= p_0^{-1} (p_0 \vee q_0)$,
so again we may write $\{q_0^{-1} (p_0 \vee q_0), p_0^{-1} (p_0 \vee q_0)\} = \{p_1,
q_1\}$ with $p_1 \neq e$. Again using \eqref{TakeAdjoint} with $(p_1,q_1)$ and the
KMS$_\beta$ condition on \eqref{KMSCond} yields
\begin{equation}
    |\varphi(v_pv^*_q)| = N(p_0)^{-\beta} N(p_1)^{-\beta} |\varphi(v_{q_1}^* v_{p_1})|.
\end{equation}

If $q_1 \vee p_1= \infty$, then Nica-covariance gives $v_{q_1}^* v_{p_1}=0$ and we are
done. If $q_1 \vee p_1< \infty$, then we continue the process by induction. If at some
stage  $p_i \vee q_i = \infty$, we obtain $|\varphi(v_pv^*_q)| = 0$ directly. Otherwise
 we generate an infinite sequence $\{(p_n, q_n)\}$  of pairs of elements of $P$ such that $p_n \neq e$ and
 \[
 |\varphi(v_p v^*_q)| = \Big(\prod^n_{i=0} N(p_i)^{-\beta}\Big) |\varphi( v_{p_{n+1}} v_{q_{n+1}}^*)|.
 \]
Since $|\varphi( v_{p_{n+1}} v_{q_{n+1}}^*)| \leq 1$ and $ \prod^n_{i=0} N(p_i)^{-\beta}
\le (\inf_{p\in P\setminus \{e\}}N(p))^{-n\beta} \to 0$ this finishes the proof that
$\varphi(v_p v^*_q)=0$ for $p\neq q$. Hence $\varphi$ satisfies \eqref{KMSformula}.

Next suppose that $\varphi$ is a state on $C^*(P)$ that satisfies \eqref{KMSformula}
 for every $p,q\in P$. In order to show
that $\varphi$ is a KMS$_\beta$ state, it suffices to show that
\[
\varphi(v_{p_1}v_{q_1}^*v_{p_2}v_{q_2}^*)=N(p_1)^{-\beta}N(q_1)^{\beta}\varphi(v_{p_2}v_{q_2}^*v_{p_1}v_{q_1}^*)
\]
for every $p_1,q_1,p_2,q_2\in P$. Fix $p_i, q_i \in P$ for $i =1,2$. By Nica-covariance, when
$v_{p_2}v_{q_2}^*v_{p_1}v_{q_1}^* \not= 0$, it can be written as $v_xv_y^*$ for some $x,y
\in P$. So our assumption on $\varphi$ implies that the right-hand side vanishes unless
$v_{p_2}v_{q_2}^*v_{p_1}v_{q_1}^*$ is self-adjoint, and hence equal to $v_{q_1}v_{p_1}^*v_{q_2}v_{p_2}^*$. Thus,
in order to conclude that $\varphi$ is a KMS$_\beta$ state, it suffices to prove that
\begin{equation}\label{KMSmonomials}
N(p_1)^{\beta}\varphi(v_{p_1}v_{q_1}^*v_{p_2}v_{q_2}^*)=N(q_1)^{\beta}\varphi(v_{q_1}v_{p_1}^*v_{q_2}v_{p_2}^*).
\end{equation}
The Nica-covariance relation for $v_{q_1}^*v_{p_2}$ shows that
\[
v_{p_1}v_{q_1}^*v_{p_2}v_{q_2}^*=\begin{cases}
v_{p_1q_1^{-1}(q_1\vee p_2)}v_{q_2p_2^{-1}(q_1\vee p_2)}^* & \text{if }q_1\vee p_2\neq \infty\\
 0 & \text{otherwise,}
\end{cases}
\]
so~\eqref{KMSformula} gives
\begin{equation}
\varphi(v_{p_1}v_{q_1}^*v_{p_2}v_{q_2}^*)
    = \begin{cases}
        N(p_1q_1^{-1}(q_1\vee p_2))^{-\beta} &\text{if } q_1\vee p_2\neq\infty\text{ and } p_1q_1^{-1}= q_2p_2\inv\\
        0 & \text{otherwise.}
    \end{cases}\label{p1q1p2q2}
\end{equation}
Similarly, Nica-covariance for $v^*_{p_1} v_{q_2}$ followed by~\eqref{KMSformula} shows
that
\begin{equation}
\varphi(v_{q_1}v_{p_1}^*v_{q_2}v_{p_2}^*)
    = \begin{cases}
        N(q_1p_1^{-1}(p_1\vee q_2))^{-\beta} &\text{if } p_1\vee q_2\neq\infty\text{ and } q_1p_1\inv =p_2q_2\inv  \\
        0 & \text{otherwise}.
    \end{cases}\label{p2q2p1q1}
\end{equation}
To see that the two cases in \eqref{p1q1p2q2} match up with those in \eqref{p2q2p1q1}
first note that
\[
p_1q_1^{-1}(q_1\vee p_2) = (p_1\vee p_1q_1^{-1}p_2)= p_1\vee q_2p_2^{-1}p_2= p_1\vee q_2,
\]
where the first equality holds by \lemref{A1L1} and the second one because $p_1q_1^{-1} =
q_2p_2^{-1}$. This  gives
\[
q_1^{-1}(q_1\vee p_2)  = p_1\inv( p_1\vee q_2).
\]
Since $N$ is a homomorphism, we then have
\begin{align*}
 N(p_1)^\beta N(p_1 q_1\inv(q_1\vee p_2))^{-\beta}
  &= N(q_1\inv(q_1\vee  p_2))^{-\beta}\\
  &= N(p_1\inv(p_1\vee  q_2))^{-\beta}\\
  & = N(q_1)^\beta N(q_1 p_1\inv(p_1\vee q_2))^{-\beta}.
\end{align*}
Combining this with \eqref{p1q1p2q2}~and~\eqref{p2q2p1q1} gives~\eqref{KMSmonomials}.
That there is at most one state satisfying~\eqref{KMSformula} follows
from~\eqref{eq:spanning set} together with linearity and continuity of states.
 \end{proof}
\begin{remark}
The same result, with the same proof, holds for the KMS states of $C_\lambda^*(P)$.
From \eqref{KMSformula}, it is clear what the values of a KMS state have to be on a dense subalgebra.
The upshot of
 \proref{kmschar} is that in order to decide whether $C^*(P)$ and $C_\lambda^*(P)$
 admit a KMS state at inverse temperature
$\beta$, we need to decide whether formula \eqref{KMSformula} determines a positive linear functional on $C^*(P)$.\end{remark}

\begin{remark}
We point out that there exist  quasi-lattice ordered groups that do not admit
homomorphisms $N$ with the properties assumed in \proref{kmschar}. This stems from the
fact that such homomorphisms must factor through the abelianisation of the monoid, and
semidirect products provide easy examples without any such homomorphisms. For instance,
any real valued homomorphism $N$ of the affine monoid $\NN\rtimes \NN^\times$ must be
trivial on the additive part $\NN$, so  \proref{kmschar} cannot apply to $C^*(\NN\rtimes
\NN^\times)$. In this case, if one makes the obvious choice $N:\NN\rtimes \NN^\times \to
[1,\infty)$ given by $N(r,a) = a$, then for each $\beta$ above the critical inverse
temperature, the extremal KMS$_\beta$ states are indexed by $\TT = \hat\ZZ$,
\cite[Theorem 7.1]{LR10}.
\end{remark}

\section{Critical temperature and generalised Gibbs states}\label{sec:criticalGibbs}
Recall that the unitaries $\{U_t\}_{t\in \RR}$ determined by $U_t \e_p = N(p)^{it} \e_p$
implement the dynamics $\alpha^N$ spatially on $C^*_\lambda(P)$ and are diagonal with
respect to the standard orthonormal basis $\{\varepsilon_x\} $ of $ \ell^2(P)$. Let $H$
denote the unbounded, diagonal self-adjoint operator on $\ell^2(P)$ with eigenvalues $w_p
= \log N(p) \geq 0$ and corresponding eigenvectors $\e_p$. Applying the analytic
functional calculus, we can write $U_t = \exp(it H)$.

We define $\exp(-\beta H)$ to be the diagonal operator with eigenvalues  $ N(p)^{-\beta}
$ with respect to the standard  basis. Then $\beta \mapsto \exp(-\beta H)$ is a semigroup
of contractions, which can be obtained as the restriction $U_{i\beta}$  to the positive
imaginary axis $\beta\geq 0$ of the operator-valued analytic extension to the upper half
plane of the unitary group $t \mapsto U_t$.
 We emphasise that the operator $\exp(-\beta H)$ is a bona-fide contractive linear operator that
can be defined directly and without any reference to the unbounded operator $H$ because
$N(p)^{-\beta} \leq 1$ for every $p\in P$.

Evaluation of the usual trace on $\B(\ell^2(P))$ at $\exp(-\beta H)$ using the standard
orthonormal basis gives an infinite sum of positive terms,
\begin{equation}\label{tracezeta}
\tr(\exp(-\beta H)) = \sum_{p\in P} N(p)^{-\beta},
\end{equation}
which takes values in $[0,\infty]$ and is decreasing as a function of $\beta\in
[0,\infty]$. We define the {\em critical inverse temperature} to be the abscissa of
convergence,
\begin{equation}\label{****}
\beta_c :=\inf \{ \beta\in (0, \infty): \tr(\exp(-\beta H)) <\infty\},
\end{equation}
with the usual convention that $\inf\emptyset = \infty$.  We collect below the
consequences of the above considerations that will be needed for our analysis of KMS
states, and in particular the KMS$_{\beta_c}$ states when $\beta_c < \infty$. Notice that
$\tr(\exp(-\beta H)) <\infty$ for every $\beta > \beta_c$ and $\sum_{p\in P}
N(p)^{-\beta} = \infty$ for every $\beta < \beta_c$.

\begin{lem} \label{criticalbeta}
Let $N: P \to [1, \infty)$ be a multiplicative morphism such that $N(p) = 1 $ only if $p
= e$ and consider the following statements.
\begin{enumerate}[(a)]
    \item\label{it:betac finite} $\beta_c <\infty$;
    \item\label{it:small Np finite} $\{p\in P: N(p) \leq r\}$ is finite for every $r
        \geq 1$;
    \item\label{it:Nps away from 0} $\inf \{N(p): p \in P, \  p \neq e\} >1$.
\end{enumerate}
Then (\ref{it:betac finite})$\;\implies\;$(\ref{it:small Np
finite})$\;\implies\;$(\ref{it:Nps away from 0}).

Suppose, in addition, that $P$ is finitely generated. Let $\S$ be a finite set of
nontrivial generators and define $\eta:= \min \{N(s): s \in \S\} $. Then $\beta_c \leq
\frac{\log |\S|}{\log \eta} <\infty$.

 \end{lem}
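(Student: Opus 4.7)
\medskip

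\textbf{Plan of proof.}

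For (\ref{it:betac finite})$\Rightarrow$(\ref{it:small Np finite}), pick any $\beta > \beta_c$ so that the series in \eqref{tracezeta} converges, and note that every element of $A_r := \{p \in P : N(p) \leq r\}$ contributes at least $r^{-\beta}$ to this positive-term series. Thus $|A_r|\cdot r^{-\beta} \leq \tr(\exp(-\beta H)) < \infty$, forcing $|A_r| < \infty$.

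For (\ref{it:small Np finite})$\Rightarrow$(\ref{it:Nps away from 0}), fix any $r>1$ and consider the finite set $A_r$. If $A_r \setminus \{e\} = \emptyset$, then $\inf\{N(p) : p\neq e\} \geq r > 1$. Otherwise, since $A_r$ is finite and $N(q)>1$ for each $q \in A_r \setminus\{e\}$ (by the assumption on $N$), the quantity $\eta_0 := \min\{N(q) : q \in A_r,\ q \neq e\}$ is strictly greater than $1$; and every $p \in P \setminus \{e\}$ either lies in $A_r$ (in which case $N(p) \geq \eta_0$) or satisfies $N(p) > r \geq \eta_0$. Either way, $\inf\{N(p) : p \neq e\} \geq \eta_0 > 1$.

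For the finitely generated case, I will bound the series \eqref{tracezeta} by a geometric series. Let $\S$ be a finite set of nontrivial generators and $\eta = \min\{N(s) : s \in \S\}$; since $N(s) > 1$ for every $s \in \S$ and $\S$ is finite, $\eta > 1$. For each $p \in P \setminus \{e\}$, let $\ell(p)$ denote the minimal $k \geq 1$ such that $p = s_1 s_2 \cdots s_k$ with $s_i \in \S$. Multiplicativity of $N$ yields $N(p) \geq \eta^{\ell(p)}$, and the number of elements of $P$ of length exactly $k$ is at most $|\S|^k$ since each such element is determined by at least one word in $\S^k$. Hence
\begin{equation*}
\sum_{p \in P} N(p)^{-\beta} \;\leq\; 1 + \sum_{k=1}^\infty |\S|^k \eta^{-k\beta} \;=\; 1 + \sum_{k=1}^\infty \bigl(|\S|\eta^{-\beta}\bigr)^k,
\end{equation*}
which converges whenever $|\S|\eta^{-\beta} < 1$, i.e.\ whenever $\beta > \log|\S|/\log\eta$. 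By the definition \eqref{****} of $\beta_c$, this gives $\beta_c \leq \log|\S|/\log\eta < \infty$.

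The steps are all elementary; the only point that requires mild care is the bookkeeping between ``words of length $k$'' and ``elements of length $k$'' in the finitely generated bound, to ensure each element of $P$ is counted at most once on the right-hand side. Since we only need an upper bound on the counting function, using the minimal-length function $\ell(p)$ together with $|\{p : \ell(p) = k\}| \leq |\S|^k$ suffices.
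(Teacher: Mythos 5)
Your proposal is correct and follows essentially the same route as the paper: the implication (\ref{it:betac finite})$\implies$(\ref{it:small Np finite}) by comparing terms of the series \eqref{tracezeta}, the implication (\ref{it:small Np finite})$\implies$(\ref{it:Nps away from 0}) by an elementary finiteness argument (you argue directly where the paper argues by contrapositive), and the bound $\beta_c \leq \log|\S|/\log\eta$ by dominating $\sum_p N(p)^{-\beta}$ with the geometric series over the free monoid $\FF^+_\S$. The only cosmetic difference is that you count elements of minimal length $k$ rather than all words of length $k$, which changes nothing since both yield the same convergence criterion $|\S|\eta^{-\beta}<1$.
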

\begin{proof}
If condition~(\ref{it:Nps away from 0}) fails, then there exists a sequence $p_n \in P$
such that $1< N(p_{n+1}) <N(p_n)$, so condition~(\ref{it:small Np finite}) also fails.
Suppose now that~(\ref{it:small Np finite}) fails and fix $x > 1$ such that $\{p\in P:
N(p) \leq x\}$ is infinite. Then the series in \eqref{tracezeta} has infinitely many
terms satisfying $ N(p)^{-\beta} \geq x^{-\beta} $, hence
 diverges for every $\beta \in (0,\infty)$ so condition~(\ref{it:betac finite}) fails.

Assume now $\S \subset P\setminus \{e\}$ is a finite set of generators for $P$ and define
$\eta:= \min \{N(s): s \in \S\} $. Each $p \in P$ can be factored as $p = s_1 s_2 \cdots
s_n$ with $s_k \in \S$. Then $N(p)^{-\beta} \leq \eta^{-\beta n}$, giving
\[
    \sum_{p\in P} N(p)^{-\beta}   \leq \sum_{x\in \FF^+_\S} \eta^{-\beta |x|}.
\]
Since the series on the right converges if and only if $|\S| \eta^{-\beta} <1$, we have
$\beta_c \leq \frac{\log |\S|}{\log \eta}$.
\end{proof}

\begin{remark}
Let $\FF^+_\S$ be  the free monoid generated by a countable set $\S = \{s_1, s_2,
\ldots\}$.
\begin{enumerate}
\item The usual word length gives an example that satisfies~(\ref{it:Nps away from
    0}) but not~(\ref{it:small Np finite}) in \lemref{criticalbeta}.
\item  The homomorphism $N: \FF^+_\S \to [1,\infty)$ determined by $N(s_k) = \log
    (k+2)$ gives an example that satisfies~(\ref{it:small Np finite}) but
    not~(\ref{it:betac finite}) in \lemref{criticalbeta}.
\end{enumerate}
\end{remark}

\begin{prop}\label{dirichletseriesprop}
Suppose $(G,P)$ is a quasi-lattice ordered group and let $N:P \to [1,\infty)$ be a
multiplicative map such that $\{p\in P: N(p) \leq x\}$ is finite for every $x \geq 1$.
Let $(\lambda_n)_{n\in\NN}$ be the strictly increasing listing of the elements of the set
$\{\log N(p): p\in P\}$. Then
\begin{enumerate}
\item $\exp(- H)$ is a positive compact operator on $\ell^2(P)$ with decreasing
    eigenvalue list $1 = e^{-\lambda_0} > \cdots > e^{-\lambda_n} >
    e^{-\lambda_{n+1}} \cdots$, and finite multiplicities $a_n := \#\{p\in P:  N(p) =
    e^{\lambda_n}\}$;
\item $\beta_c = \underset{n\to\infty}{\lim\sup}\;
    \frac{1}{\lambda_n}\log\left(\#\{p: N_p \leq e^{\lambda_n} \}\right ) $
\item if  $\beta_c < \infty$,  the Dirichlet series $ \sum _{n=0}^\infty a_n
    e^{-s\lambda_n} $ is absolutely convergent on the region $\Re s > \beta_c$ to an
    analytic function $s\mapsto \tr(\exp(-s H))$ for which $s = \beta_c$ is a
    singular point.
\end{enumerate}
\end{prop}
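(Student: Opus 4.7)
The plan is to dispatch the three items in sequence, leaning on the diagonal structure of $\exp(-\beta H)$ and the classical theory of generalized Dirichlet series with nonnegative coefficients.

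For (1), I would note that $\exp(-H)$ is diagonal in the basis $\{\e_p:p\in P\}$ with eigenvalues $N(p)^{-1}\in(0,1]$. Grouping the $p$'s according to the value of $N(p)$ exhibits the distinct eigenvalues as $e^{-\lambda_n}$, each with multiplicity $a_n$. The hypothesis forces $a_n\leq\#\{p:N(p)\leq e^{\lambda_n}\}<\infty$, so every eigenspace is finite-dimensional; and for each $\eps>0$ that same hypothesis produces only finitely many $n$ with $e^{-\lambda_n}>\eps$, so $\exp(-H)$ is the norm limit of its truncations to those finitely many eigenspaces and hence compact.

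For (2), I would first treat the trivial case in which $P$ is finite: the Dirichlet series is then a finite sum, $\beta_c=0$, and the $\limsup$ formula holds degenerately. Otherwise $\sum_n a_n=|P|=\infty$, so $\sum_n a_n e^{-\beta\lambda_n}$ diverges at $\beta=0$ and $\beta_c\geq 0$. The classical Cauchy--Hadamard formula for the abscissa of convergence of a generalized Dirichlet series $\sum a_n e^{-s\lambda_n}$ with $a_n\geq 0$, $\lambda_n\nearrow\infty$, that diverges at $s=0$ reads $\sigma_c=\limsup_n \frac{\log A_n}{\lambda_n}$, where $A_n=\sum_{k\leq n}a_k=\#\{p\in P:N(p)\leq e^{\lambda_n}\}$, which is precisely the stated formula.

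For (3), absolute convergence on $\{\Re s>\beta_c\}$ is immediate from $|N(p)^{-s}|=N(p)^{-\Re s}$ and the definition of $\beta_c$; domination by the real convergent series at $\beta_c+\eps$ together with the Weierstrass M-test yields locally uniform convergence and hence analyticity on that half-plane, while the diagonal expansion of $\exp(-sH)$ identifies the resulting analytic function with $\tr(\exp(-sH))$. The only delicate step is showing that $s=\beta_c$ is singular, which I would establish via Landau's theorem for Dirichlet series with nonnegative coefficients: if $s\mapsto \tr(\exp(-sH))$ extended analytically past $\beta_c$, then its Taylor expansion around a real point $\beta_c+\eta$ just to the right of $\beta_c$ would have radius of convergence strictly greater than $\eta$; computing the $k$-th derivative term-by-term as $(-1)^k\sum_n a_n\lambda_n^k e^{-(\beta_c+\eta)\lambda_n}$, substituting into the Taylor series at a real $\beta<\beta_c$, and rearranging the resulting positive double sum by Tonelli would exhibit $\sum_n a_n e^{-\beta\lambda_n}$ as convergent, contradicting the definition of $\beta_c$. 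This Tonelli-style rearrangement, valid precisely because all $a_n$ and $\lambda_n$ are nonnegative, is the crux of the proof and the only step that is not entirely routine.
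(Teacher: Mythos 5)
Your argument is correct and follows essentially the same route as the paper: part (1) by the diagonal/spectral description of $\exp(-H)$, and parts (2) and (3) via the classical theory of generalized Dirichlet series with nonnegative coefficients — the Cauchy--Hadamard formula for the abscissa of convergence and Landau's theorem on the singularity at the real abscissa, which the paper simply cites as Theorems 8 and 10 of Hardy--Riesz while you sketch the Landau argument directly.
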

\begin{proof}
Since $N$ is multiplicative, $N^{-1}(1)$ is a subsemigroup of $P$, and it is finite by
assumption. Since $P$ has no torsion, it follows that $N^{-1}(1) = \{e\}$. So $a_0 = 1$.

The set $\{ \log N(p) :p\in P\}$ has finite intersection with any bounded segment $[0,
\lambda] $,  so its elements can be listed in increasing order, yielding a sequence
$(\lambda_n) \nearrow \infty$. Part (1)  then follows from the spectral theorem for
self-adjoint compact operators. Since $\#\{p: N_p \leq e^{\lambda_n} \} = \sum_{j=0}^n
a_j$, parts (2)~and~(3) follow from \cite[Theorems 8~and~10]{HR}.
\end{proof}

\begin{definition}
In situations where $\beta_c <\infty$, we define the {\em (generalised) Gibbs state at
inverse temperature $\beta >\beta_c$} to be the state on $\B(\ell^2(P))$ defined by
\[
\psi_\beta(X)  := \tr(\exp(-\beta H))\inv \tr(X \exp(-\beta H)) \qquad X \in \B(\ell^2(P)).
\]
This gives a state on $C_\lambda^*(P)$. Let $\lambda : C^*(P) \to C^*_\lambda(P)$ be the
left-regular representation $\lambda(v_p) = L_p$. Then $\psi_\beta \circ \lambda$ is a
state on $C^*(P)$, which we again call the generalised Gibbs state on $C^*(P)$ at inverse
temperature $\beta$.
\end{definition}

For the next result, recall that a {\em ground state} for $\alpha$ is a state $\phi$ such
that the entire function \begin{equation}\label{ground} z \mapsto \phi(y \alpha_z(x)) ,
\qquad z\in \CC,
\end{equation}
is bounded on the upper half plane for every  $x,y \in C^*_\lambda(P)$ with $x$ analytic.

\begin{thm} \label{phasetransitionmain}
Let $(G,P)$ be a quasi-lattice ordered group and suppose $N:P\to [1,\infty)$ is a
multiplicative map such that $N(p) = 1$ only if $p = e$, let $\alpha^N$ be the associated
dynamics on $C_\lambda^*(P)$, and assume $0<\beta_c < \infty$. Then
\begin{enumerate}
\item for each $\beta > \beta_c$, the Gibbs state $\psi_\beta$ is the unique
    KMS$_\beta$ state of the dynamical system $(C_\lambda^*(P), \alpha^N)$ and its
    GNS representation is a faithful, type~I$_\infty$ representation of
    $C_\lambda^*(P)$;
\item  if $Q_e$ denotes the projection onto the vacuum subspace $\CC \e_e$ of
    $\ell^2(P)$, then
\[
    \tr(\exp(-\beta H))\  \psi_\beta(Q_e)  = 1 \qquad \text (\beta > \beta_c);\quad\text{ and}
\]
\item there is a unique KMS$_{\beta_c}$ state for $(C_\lambda^*(P), \alpha^N)$.
\end{enumerate}
Whether or not $\beta_c < \infty$, the Fock state $\phi_e(\cdot) = \langle \pi_L(\cdot)
\e_e, \e_e\rangle$ corresponding to the vacuum vector $\e_e$ is the unique ground state
for $(C_\lambda^*(P), \alpha^N)$. This ground state is a KMS$_\infty$ state if
$\beta_c <\infty$.
\end{thm}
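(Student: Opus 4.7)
The plan is to verify the four conclusions in turn, leaning on \proref{kmschar} as the main tool throughout.

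\textbf{Parts~(1) and~(2).} Claim~(2) is immediate: $\psi_\beta(Q_e) = \tr(Q_e e^{-\beta H})/Z(\beta) = \langle e^{-\beta H}\e_e,\e_e\rangle/Z(\beta) = 1/Z(\beta)$ because $H\e_e = 0$. For the first half of claim~(1), I will verify that $\psi_\beta$ satisfies \eqref{KMSformula} by computing the trace against $\{\e_x\}$: the diagonal entry $\langle L_pL_q^*\e_x,\e_x\rangle$ is nonzero only when $p=q$ and $x\in pP$, in which case it equals $1$; summing gives $Z(\beta)\psi_\beta(L_pL_p^*) = \sum_{x\in pP}N(x)^{-\beta} = N(p)^{-\beta}Z(\beta)$ by multiplicativity of~$N$. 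Uniqueness then follows directly from \proref{kmschar}. For faithfulness of the GNS representation: the density $e^{-\beta H}/Z(\beta)$ has full rank, so $\psi_\beta$ is faithful on $\B(\ell^2(P))$ and hence on $C_\lambda^*(P)$, and faithful states have faithful GNS representations. For type~I$_\infty$, I will model the GNS space as a subspace of the Hilbert--Schmidt space on $\ell^2(P)$ with cyclic vector $\Omega_\beta = e^{-\beta H/2}/\sqrt{Z(\beta)}$ and $\pi_\beta(a)$ acting by left multiplication, and argue that $\pi_\beta(C_\lambda^*(P))''$ is isomorphic to a type~I$_\infty$ factor.

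\textbf{Part~(3).} Uniqueness is immediate from \proref{kmschar}. For existence I will take a weak-$*$ cluster point $\psi_{\beta_c}$ of $\{\psi_\beta\}$ as $\beta\searrow\beta_c$; such a cluster point exists by weak-$*$ compactness of the state space, remains a state, and inherits the KMS$_{\beta_c}$ condition because $\alpha_{i\beta}(L_pL_q^*) = (N(p)/N(q))^{-\beta}L_pL_q^*$ depends continuously on~$\beta$, so the KMS relation passes to the limit on the spanning monomials.

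\textbf{Part~(4).} To pin down ground states: for any ground state~$\phi$ and $p\ne e$, the function $z\mapsto \phi(v_p\alpha_z(v_p^*)) = N(p)^{-iz}\phi(v_pv_p^*)$ has modulus $N(p)^{\Im z}|\phi(v_pv_p^*)|$, which is unbounded on the upper half plane unless $\phi(v_pv_p^*)=0$; Cauchy--Schwarz via $(v_pv_q^*)(v_pv_q^*)^* = v_pv_p^*$ and the symmetric identity for $q\ne e$ then gives $\phi(v_pv_q^*) = \delta_{p,e}\delta_{q,e} = \phi_e(v_pv_q^*)$, so $\phi = \phi_e$. To confirm that $\phi_e$ really is a ground state, for an analytic $x = \sum_{p,q}c_{p,q}L_pL_q^*$ I compute $\alpha_z(x)\e_e = \sum_p c_{p,e}N(p)^{iz}\e_p$ and use $N(p)^{-2\Im z}\leq 1$ for $\Im z\geq 0$ to conclude $\|\alpha_z(x)\e_e\|\leq \|x\e_e\|$, whence $|\phi_e(y\alpha_z(x))|\leq \|x\e_e\|\,\|y^*\e_e\|$ on the upper half plane. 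Finally, when $\beta_c<\infty$, $\psi_\beta(L_pL_q^*) = \delta_{p,q}N(p)^{-\beta}\to\delta_{p,e}\delta_{q,e}$ as $\beta\to\infty$ shows that $\phi_e$ is a weak-$*$ limit of the $\psi_\beta$, hence a KMS$_\infty$ state.

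\textbf{Main obstacle.} I anticipate the type~I$_\infty$ assertion to be the most delicate step: while the trace and KMS computations are clean consequences of Nica covariance and multiplicativity of~$N$, identifying $\pi_\beta(C_\lambda^*(P))''$ as a type~I$_\infty$ factor requires producing enough finite-rank projections in the weak closure. I would approach this by averaging the implementing unitaries $U_t$ to recover the finite-rank spectral projections of~$H$ afforded by \proref{dirichletseriesprop}, and then combining with the action of $C_\lambda^*(P)$ on $\ell^2(P)$ to generate all matrix units.
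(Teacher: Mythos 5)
Most of what you write is correct and close in spirit to the paper's argument: the diagonal trace computation verifying \eqref{KMSformula} and the appeal to \proref{kmschar} for uniqueness, the weak-$*$ cluster point argument at $\beta_c$ (the paper invokes \cite[Proposition~5.3.25]{BR}, but your hands-on limit on the spanning monomials works), the ground-state analysis, and the KMS$_\infty$ limit are all fine. Your evaluation $\psi_\beta(Q_e)=\langle e^{-\beta H}\e_e,\e_e\rangle/Z_N(\beta)=1/Z_N(\beta)$ is even more direct than the paper's route via $\sum_p L_pQ_eL_p^*=1$, and your observation that a faithful state has a faithful GNS representation correctly disposes of faithfulness.

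The genuine gap is the type~I$_\infty$ assertion, exactly the step you flag. Your proposed fix --- ``averaging the implementing unitaries $U_t$'' --- does not work as stated: $U_t$ is not an element of $C^*_\lambda(P)$, nor is it known a priori to lie in $C^*_\lambda(P)''$ (that is essentially what has to be proved), and in the GNS picture the unitaries implementing the dynamics need not belong to $\pi_\beta(C^*_\lambda(P))''$ at all. Averaging \emph{inside} the algebra, i.e.\ forming $\int f(t)\,\alpha_t(a)\,dt$ for $a\in C^*_\lambda(P)$, only produces elements of the spectral subspaces of the dynamics; the zero-frequency part is the fixed-point algebra $\clsp\{L_pL_q^*: N(p)=N(q)\}$, not the vacuum projection $Q_e$, so no matrix units arise this way. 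Moreover, since $\psi_\beta$ extends to a normal state of $\B(\ell^2(P))$ whose density operator has full rank (hence generating range), its GNS representation is quasi-equivalent to the identity representation $\pi_L$, so $\pi_\beta(C^*_\lambda(P))''\cong C^*_\lambda(P)''$: the type~I$_\infty$ claim is literally a statement about $C^*_\lambda(P)''$ and cannot be finessed inside the Hilbert--Schmidt model. The missing idea is the paper's: $Q_e=\prod_{p\in P\setminus\{e\}}(1-L_pL_p^*)$ is the strong limit of a decreasing net of projections lying in $C^*_\lambda(P)$, hence $Q_e\in C^*_\lambda(P)''$; then $L_pQ_eL_q^*$ are the rank-one matrix units, so $\K(\l^2(P))\subseteq C^*_\lambda(P)''$ and $\pi_L$ is irreducible with $C^*_\lambda(P)''=\B(\l^2(P))$; finally quasi-equivalence of the GNS representation of $\psi_\beta$ with $\pi_L$ (see \cite[Lemma 3.2]{La93}) yields the type~I$_\infty$ conclusion (and gives faithfulness again). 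With that ingredient inserted, your proof is complete.
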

\begin{proof}
Assume first $\beta >\beta_c$, so that  $Z_N(\beta) := \tr(\exp(-\beta H))<\infty$. By
definition of $\psi_\beta$, we have
\begin{equation*}
\psi_\beta(v_p v_{q}^*):=\frac{1}{Z_N(\beta)}\sum_{x\in P}N(x)^{-\beta}\inn{L_p L_{q}^*\e_x}{\e_x}.
\end{equation*}
Since \[\inn{L_p L_{q}^*\e_x}{\e_x} = \inn{L_q^*\e_x}{L_p^*\e_x}=\begin{cases}
1 & \text{ if } p=q\leq x \text{ and }\\
0 & \text{otherwise,}
\end{cases}\]
we have,
\begin{equation}\label{computationabove}
\begin{split}
\psi_\beta(v_pv_q^*)
    &= \frac{1}{Z_N(\beta)}\sum_{x\in P}N(x)^{-\beta}\inn{L_q^*\e_x}{L_p^*\e_x}\\
    &=\delta_{p,q}\frac{1}{Z_N(\beta)}\sum_{w\in P}N(pw)^{-\beta }=\delta_{p,q} N(p)^{-\beta}.
\end{split}
\end{equation}
Hence $\psi_\beta$ satisfies \eqref{KMSformula}, and is therefore the unique KMS$_\beta$
state by \proref{kmschar}. It factors through $C_\lambda^*(P)$ by construction.

Let $L$ be the left regular representation of $P$ on $\ell^2(P)$. Then
\begin{equation}\label{projectionQ}
Q_e := \prod_{p\in P\setminus\{ e\}} (1 - L_p L_p^*)
\end{equation}
is the rank-one projection onto the basis vector in $\ell^2(P)$ corresponding to the
identity $e\in P$. The infinite product~\eqref{projectionQ} is the limit of a decreasing
net of projections indexed by the finite subsets of $P\setminus \{e\}$ and is in the von
Neumann algebra $C^*_\lambda(P)''$. For $p,q \in P$ the rank-one operator
$\inn{\cdot}{\varepsilon_q}\varepsilon_p$ is the product $L_p Q_e L_q^*$, which is also
in $C^*_\lambda(P)''$. Thus the compact operators are contained in $C^*_\lambda(P)''$ and
the left regular representation $\pi_L$ is irreducible.

The assertion about faithfulness and type holds because the range of the density operator
defining $\psi_\beta$ is generating for $\pi_L$, so the GNS representation of
$\psi_\beta$ is quasi-equivalent to $\pi_L$, see e.g. \cite[Lemma 3.2]{La93}.

A computation analogous to \eqref{computationabove} shows that $\psi_\beta(L_p Q_e L_p^*)
= N_p^{-\beta}\psi_\beta(Q_e)$ and since the series $\sum_p L_p Q_eL_p^*$ converges
strongly and monotonically to the identity and $\psi_\beta $ is a normal state on
$\B(\ell^2(P))$, we have
\[
1 = \psi_\beta(1) = \sum_p \psi_\beta(L_p Q_eL_p^*) = \sum_p N_p^{-\beta} \psi_\beta(Q_e) = Z_N(\beta) \  \psi_\beta(Q_e).
\]
To see that a KMS$_\beta$ state exists at $\beta = \beta_c <\infty$, recall that the
states of the unital $C^*$-algebra $C^*(P)$ form a weak* compact set so there exists a
weak* convergent sequence $\psi_{\beta_n}$ with $\beta_n \to \beta_c^+$, which by
\cite[Proposition~5.3.25]{BR} converges to a KMS$_{\beta_c}$ state $\psi_{\beta_c}$.  The uniqueness
follows from \proref{kmschar}.

To prove the final two assertions, suppose that $\phi$ is a ground state. We have
\[
\phi(L_p \alpha_{i\beta} (L_q^*)) = N_q^{\beta}\phi(L_p L_q^*),
\]
so $\phi$ vanishes on $L_p L_q^*$ if $q \neq e$, and hence (by taking adjoints) also if
$p\neq e$. Thus $\phi$ is the  Fock state. Conversely, direct computation shows that the
Fock state is a ground state. If $\beta_c < \infty$, we may take  limits as $\beta
\to\infty$ in \eqref{computationabove}, which shows that the Fock state is a KMS$_\infty$
state.		
\end{proof}

\begin{remark}
What happens below the critical temperature, when it is strictly positive, is somewhat
mysterious. But we will see later in \proref{subcriticalKMS} that for most values $\beta
<\beta_c$ there are no KMS$_\beta$ states.
\end{remark}

The case $\beta_c =0$ requires special consideration. Recall first that, by definition, a
KMS$_0$ state (or {\em chaotic equilibrium state}) is an $\alpha^N$-invariant trace.

\begin{prop}\label{prp:crit=0}
Let $(G,P)$ be a quasi-lattice ordered group and suppose $N:P\to [1,\infty)$ is a
multiplicative map such that $N(p) = 1$ only if $p = e$. Consider the following
conditions:
\begin{enumerate}
\item $\beta_c =0$;
\item the $C^*$-algebra $C^*(P)$ has a KMS$_0$ state that vanishes at $v_pv_q^*$ for
    $p\neq q$;
\item the $C^*$-algebra $C^*(P)$ has a tracial state;
\item  $P$ is lattice ordered.
\end{enumerate}
Then (1)~$\implies$~(2)~$\iff$~(3)~$\iff$~(4)
\end{prop}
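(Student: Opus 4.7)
The plan is to prove the four implications (1)$\implies$(2), (2)$\implies$(3), (3)$\implies$(4), and (4)$\implies$(2), which together establish the claimed equivalences. Two of these are immediate: (2)$\implies$(3) holds by definition (a KMS$_0$ state is an $\alpha^N$-invariant trace), and for (1)$\implies$(2), when $\beta_c = 0$ the partition function \eqref{tracezeta} is finite for every $\beta > 0$, so the Gibbs state $\psi_\beta$ of \secref{sec:criticalGibbs} is defined and is a KMS$_\beta$ state by \proref{kmschar} (with \lemref{criticalbeta} supplying the hypothesis $\inf_{p \neq e} N(p) > 1$). Taking a weak*-limit point $\psi_0$ of a sequence $\psi_{\beta_n}$ with $\beta_n \searrow 0$ produces a KMS$_0$ state by \cite[Proposition~5.3.25]{BR}, and the formula $\psi_{\beta_n}(v_pv_q^*) = \delta_{p,q}N(p)^{-\beta_n}$ passes to $\delta_{p,q}$ in the limit.

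For (3)$\implies$(4), I would start from the observation that any tracial state $\tau$ satisfies $\tau(v_pv_p^*) = \tau(v_p^*v_p) = 1$ for each $p \in P$. The projection $1 - v_pv_p^*$ therefore lies in the left kernel of $\tau$, so Cauchy--Schwarz gives $\tau((1-v_pv_p^*)b) = 0$ for every $b \in C^*(P)$. Substituting $b = v_qv_q^*$ and applying Nica-covariance \eqref{eq-proj-nica} yields $\tau(v_{p\vee q}v_{p\vee q}^*) = \tau(v_qv_q^*) = 1$; if $p \vee q = \infty$ this reads $0 = 1$, forcing $p \vee q$ to be finite for every pair $p, q \in P$.

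For (4)$\implies$(2), the key observation is that when $P$ is lattice ordered the Nica relations \eqref{fullpresentation} become trivial for any family of unitaries indexed by $P$, since $v_pv_p^* = 1 = v_{p\vee q}v_{p\vee q}^*$. In particular the canonical unitaries $u_p \in C^*_r(G)$ for $p \in P$ satisfy the defining relations of $C^*(P)$, so the universal property yields a unital $*$-homomorphism $\pi : C^*(P) \to C^*_r(G)$ with $\pi(v_p) = u_p$. Pulling the canonical trace of $C^*_r(G)$ back through $\pi$ produces a state $\psi_0$ on $C^*(P)$ with $\psi_0(v_pv_q^*) = \delta_{p,q}$, which is automatically $\alpha^N$-invariant (it vanishes off the diagonal and $\alpha^N$ fixes each $v_pv_p^*$) and tracial, hence a KMS$_0$ state of the required form.

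The main obstacle, in my view, is the implication (3)$\implies$(4): the trick is to recognise that a trace forces each $v_p$ to behave like a unitary modulo the GNS kernel, and then a single application of Nica-covariance rules out incompatible pairs. The remaining implications are relatively routine once one knows to manufacture the KMS$_0$ state through the canonical trace on $C^*_r(G)$.
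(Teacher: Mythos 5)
Your proof is correct and follows essentially the same route as the paper: the same cycle (1)$\implies$(2)$\implies$(3)$\implies$(4)$\implies$(2), with weak*-limits of Gibbs states via \cite[Proposition~5.3.25]{BR} for (1)$\implies$(2) and a pullback of the canonical trace on the group $C^*$-algebra for (4)$\implies$(2). The only cosmetic differences are that for (3)$\implies$(4) you apply Cauchy--Schwarz to the trace-null projection $1-v_pv_p^*$ where the paper notes that $\tau(v_pv_p^*+v_qv_q^*)=2$ forces the two range projections to be non-orthogonal, and that you map into $C^*_r(G)$ rather than $C^*(G)$; both variants are equally valid.
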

\begin{proof}
Suppose first that $\beta_c =0$. Applying Theorem~\ref{phasetransitionmain}(1) to each
$\beta \in \{1/n : n = 1, 2, \dots\}$ gives a sequence $\psi_n$ of KMS$_{1/n}$ states
that all vanish on $v_p v^*_q$ for $p \not= q$. These are automatically
$\alpha^N$-invariant because each $1/n > 0$. Taking the limit of any weak$^*$-convergent
subsequence of the $\psi_n$ gives~(2) by \cite[Proposition~5.3.25]{BR}. This proves
(1)~implies~(2). That (2)~implies~(3) is straightforward.

Suppose now $\tau$ is a tracial state on $C^*(P)$ and let $p, q\in P$. Then
\[
 \tau(v_pv_{p}^*+ v_qv_{q}^*) = \tau(v_pv_{p}^*)+  \tau(v_qv_{q}^*) = \tau(v_{p}^*v_p)  + \tau(v_{q}^*v_q) =2,
 \]
so  $v_pv_{p}^* +v_qv_{q}^*$ cannot be a projection, which means that  $v_pv_{p}^*$ and
$v_qv_{q}^*$ are not orthogonal.  Hence $p\vee q <\infty$ for every $p,q\in P$ and $P$ is
lattice ordered. This proves (3)~implies~(4).

To prove (4)~implies~(2), assume $P$ is lattice ordered. Then the restriction to $P$ of
any unitary representation of $G$ trivially satisfies Nica-covariance. So the universal
property of $C^*(P)$ gives a homomorphism $C^*(P) \to C^*(G)$ that carries generators to
generators. Post-composing this map with the canonical trace on $C^*(G)$ yields a trace
on $C^*(P)$.
\end{proof}

\begin{remark}
In the proof of \proref{prp:crit=0}, we cannot conclude that $C^*(P)$ admits a unique
$\alpha^N$-invariant trace because \proref{kmschar} does not apply at $\beta =0$. To see
what goes wrong, consider $P = \NN^2$ with its usual order and $N : \NN^2 \to \RR$ given
by $N(n) = n_1 + n_2$. This yields the dynamics on $C^*(\ZZ^2)$ given by $\alpha_t(U_m) =
e^{it(m_1 + m_2)}$, which fixes the subalgebra $\clsp\{U_{(j,-j)} : j \in \ZZ\} \cong
C(\TT)$. So every probability measure on $\TT$ determines an $\alpha$-invariant trace on
$C^*(P)$.
\end{remark}

\begin{remark}
For each integer $n \geq 2$ the free monoid $P = \FF_n^+$ on $n$ generators gives rise to
$C^*(\FF_n^+)\cong \T\O_n$, the Toeplitz extension of the Cuntz algebra $\O_n$. The
dynamics corresponding to the choice $N_s = 1$ for each generator $s$ of $\FF_n^+$ is the
usual periodic gauge action. In this case  we have $\beta_c = \log n$ \cite{ole-ped,eva}.
Consideration of the free monoid on infinitely many generators leads to $\T\O_\infty$,
which has no KMS$_\beta$ states at any finite $\beta$. On the other hand, taking, for
example, $N(i) = 2^i$ for each generator $i$ of $\mathbb{F}^+_\infty$ gives a dynamics on
$\mathcal{TO}_\infty$ that admits KMS$_\beta$ states at many finite inverse temperatures.
\end{remark}

\section{Growth rate and inversion formula}\label{sec:growthrate}
The partition function $Z_N(\beta) :=
\tr(\exp(-\beta H)) = \sum_{p\in P} N_p^{-\beta}$ of the system $(C^*_\lambda(P), \alpha^N)$
 and in particular its abscissa of convergence
are intrinsically related to the growth of $P$ relative to $N$. Indeed, $\beta \mapsto
Z_N(\beta) = \tr(\exp(-\beta H))$ is the growth series for $P$ relative to the weight $w
= \log N$ evaluated at $t= e^{-\beta}$, \cite{Mc13,AN09,S1,S2,S3}. We explore next this
interesting connection of the classification of equilibrium states and recent work on
monoid growth.

Recall from the proof of \thmref{phasetransitionmain} that $Q_e = \prod_{p\in
P}(1-L_pL_p^*)$, where the infinite product is viewed as the monotone limit of a
decreasing net of projections indexed by the finite subsets of $F$ of $P\setminus \{e\}$.
We explore the relationship between $Q_e$ and the skew growth series and the clique
polynomial of $P$, cf. \cite{AN09,Mc13,S3}. Recall that a {\em clique} in a quasi-lattice
ordered monoid $P$ is a finite subset $F$ that has a least upper bound $ \vee F$ in $P$.
The motivating example is that of a right angled Artin monoid, generated by the vertices
of a simplicial graph $\Gamma$, in which a pair of generators commutes if and only if
they are joined by an edge. In this case the cliques are the subsets of generators that
correspond to finite full subgraphs of $\Gamma$. By convention, we shall admit the empty
set as a clique and write $\vee \emptyset = e$.
 For any subset $A \subseteq P$, we define $\cl(A) := \{F \subseteq A : F\text{ is a
clique}\}$.

Let $P$ be a quasi-lattice ordered monoid with a homomorphism $N: P\to [1,\infty)$ as in \thmref{phasetransitionmain}.
Then  $w = \log N$ is a degree map as defined in \cite[Section 4]{S3} and $P$ satisfies the descending chain condition.
It follows that the set $\S$ of minimal elements in $P$  generates $P$,
in fact $\S$ is the smallest generating subset. When $\S$ is finite, the \emph{clique polynomial} is
 \[
 C_{\S,w}(t) :=  \sum_{F \in \cl(\S)} (-1)^{|F|} t^{w(\vee F) },
 \]
see \cite{CF69,AN09,Mc13}. The \emph{skew-growth series} defined by  Saito in \cite{S3}
provides a vast generalization for cancelative monoids endowed with a degree map, and
consists of alternating sums over towers. Since in the present situation $P$ has the
least upper bound property, Saito's towers have height at most 1, and are indexed by
cliques themselves, see \cite[Example 2]{S3}. Thus, when the  set   $\S$ of minimal
elements is infinite,  the skew-growth function $N_ {P,\operatorname{deg}}$, relative to
the degree map $\operatorname{deg} = w$, is given by the same formula, now interpreted as
a formal infinite \emph{clique series}, which for  $t=e^{-\beta}$ becomes
\begin{equation}\label{eq:clique poly}
 C_{\S,w}(e^{-\beta}) = \sum_{F \in \cl(\S)} (-1)^{|F|} e^{-\beta w(\vee F) } .
\end{equation}
We show that the vacuum projection $Q_e$ has an expression as an operator-valued product
over generators that is analogous to (the inverse of) the familiar Euler product over the
prime numbers. This product has an obvious expansion for finite  $\S$, namely
 \begin{equation}\label{eq:operator clique series}
   \sum_{F \in \cl(\S)} (-1)^{|F|} L_{\vee F} L_{\vee F}^*,
\end{equation}
which is a finite sum and belongs to $ C^*_\lambda(P)$. When  $\S$ is infinite,
\eqref{eq:operator clique series} is an infinite sum, which we interpret as
 the strong operator limit of the decreasing net $\{\sum_{F \in \cl(\F)} (-1)^{|F|} L_{\vee F} L_{\vee F}^*\}_{\F}$,
 indexed by the finite subsets $\F$ of $\S$ directed by inclusion, and is an element of $C^*_\lambda(P)''$.

\begin{prop}
Suppose $(G,P)$ is a quasi-lattice ordered group, and let $\S$ be the set of minimal elements of $P\setminus \{e\}$. Then
\begin{equation}\label{projectioncliquepoly}
 Q_e = \prod_{s\in \S} (1-L_sL_s^*) =\sum_{F \in \cl(\S)} (-1)^{|F|} L_{\vee F} L_{\vee F}^* .
 \end{equation}
\end{prop}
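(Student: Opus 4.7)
The plan is to establish the two equalities separately, exploiting that the projections $L_sL_s^*$ for $s\in\S$ pairwise commute and that every non-identity element of $P$ lies above some minimal generator. First I would observe that Nica-covariance gives $L_sL_s^*L_tL_t^* = L_{s\vee t}L_{s\vee t}^*$, which is symmetric in $s,t$, so the infinite product $\prod_{s\in\S}(1-L_sL_s^*)$ makes sense as the SOT limit of the decreasing net of commuting projections $P_\F := \prod_{s\in\F}(1-L_sL_s^*)$ over finite $\F\subseteq\S$ directed by inclusion.

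For the first equality I would identify the limit with $Q_e$ via a range computation. Since $(1-L_sL_s^*)\e_q$ equals $0$ when $s\leq q$ and $\e_q$ otherwise, $P_\F\e_q$ is nonzero iff no $s\in\F$ lies below $q$; in the SOT limit the range is spanned by $\{\e_q : s\not\leq q\ \text{for every}\ s\in\S\}$. Since $w=\log N$ is a degree map, $P$ satisfies the descending chain condition, so $\S$ generates $P$ and every $q\neq e$ sits above some minimal $s\in\S$. Only $\e_e$ survives, yielding $Q_e$.

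For the second equality I would expand each finite commutative product by multilinearity,
\[
P_\F = \sum_{F\subseteq\F}(-1)^{|F|}\prod_{s\in F}L_sL_s^*,
\]
and show by induction on $|F|$, using Nica-covariance with the convention $L_\infty L_\infty^* = 0$, that $\prod_{s\in F}L_sL_s^* = L_{\vee F}L_{\vee F}^*$; this vanishes exactly when $F$ is not a clique. Hence
\[
P_\F = \sum_{F\in\cl(\F)}(-1)^{|F|}L_{\vee F}L_{\vee F}^*,
\]
which is precisely the $\F$-truncation of the operator-valued clique series on the right of \eqref{projectioncliquepoly}. Passing to the SOT limit over $\F$ then delivers the claimed identity.

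The clique-sum identity for each finite $\F$ is automatic from Nica-covariance, and the two SOT limits agree simply because the truncations agree for every $\F$. The only genuine content is the range computation identifying the limit with $Q_e$, which hinges on the descending chain condition and the minimality of $\S$; I do not anticipate any real obstacle beyond bookkeeping for the SOT limit in the infinite case.
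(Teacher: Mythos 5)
Your argument is correct and is essentially the proof given in the paper: the second equality is obtained in exactly the same way, by expanding the finite products and using Nica-covariance (with the convention $L_\infty=0$) to identify $\prod_{s\in F}L_sL_s^*$ with $L_{\vee F}L_{\vee F}^*$ when $F$ is a clique and with $0$ otherwise, then passing to the SOT limit over finite $\F\subseteq\S$. The only cosmetic difference is in the first equality, which you verify spatially on basis vectors, whereas the paper deduces it from the previously established formula $Q_e=\prod_{p\in P\setminus\{e\}}(1-L_pL_p^*)$ via two operator inequalities; both versions rest on the same key point that, by the descending chain condition coming from the degree map, every $p\in P\setminus\{e\}$ dominates some minimal $s\in\S$.
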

\begin{proof}
Since $\S \subset P\setminus \{e\}$ it is clear that $Q_e \leq  \prod_{s\in \S}
(1-L_sL_s^*)$. For every $p \in P\setminus \{e\}$ there exists $s\in \S$ such that  $p = sp'$
for some $p'\in P$, so $L_p L_p^* = L_s L_{p'} L_{p'}^*L_s^* \leq L_sL_s^*$,
and thus $L_pL_p^* (1 - L_sL_s^*) =0$. Hence
$(1-L_pL_p^*) (1-L_sL_s^*) = 1-L_sL_s^*$, from which we see that $Q_e \geq  \prod_{s\in
\S} (1-L_sL_s^*)$. This proves that $Q_e = \prod_{s\in \S} (1-L_sL_s^*)$, as a strong
limit when $\S$ is infinite.

Suppose now that $\F$ is a finite subset of $\S$. Then
\[
\prod_{s\in \F} (1-L_sL_s^*) = \sum_{F\subset \F} (-1)^{|F|} \prod_{s\in F} L_s L_s^*.
\]
By Nica-covariance, $\prod_{s\in F} L_sL_s^* $ is equal to $ L_{\vee F} L_{\vee F}^* $ or
to $0$, according to whether $F$ is a clique or not. Since $ \prod_{s\in \S} (1-L_sL_s^*)
= \lim_{\F \nearrow \S} \prod_{s\in \F} (1-L_sL_s^*)$, this finishes the proof.
\end{proof}

\begin{cor}\label{cor:inversion}
If $\beta > \beta_c$, then we have the inversion formula
\[
  \Big(\sum_{p \in P} N_p^{-\beta}\Big)  \Big( \sum_{F \in \cl(\S)} (-1)^{|F|} N_{\vee F}^{-\beta }  \Big) =1.
\]
\end{cor}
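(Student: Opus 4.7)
The plan is to evaluate the generalised Gibbs state $\psi_\beta$ on both sides of the identity $Q_e = \sum_{F\in\cl(\S)}(-1)^{|F|}L_{\vee F}L_{\vee F}^*$ furnished by \eqref{projectioncliquepoly} and then invoke \thmref{phasetransitionmain}(2) to identify $\psi_\beta(Q_e)$ with $Z_N(\beta)^{-1}$.

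Because $\beta>\beta_c$, the operator $\exp(-\beta H)$ is trace class, so $\psi_\beta(X) = Z_N(\beta)^{-1}\tr(X\exp(-\beta H))$ is a \emph{normal} state on $\B(\ell^2(P))$ and is therefore continuous along strong-operator limits of bounded monotone nets. Applied to the decreasing net of partial sums
\[
\prod_{s\in\F}(1-L_sL_s^*) \;=\; \sum_{F\in\cl(\F)}(-1)^{|F|}L_{\vee F}L_{\vee F}^*, \qquad \F\subset\S \text{ finite,}
\]
whose strong limit is $Q_e$, this yields
\[
\psi_\beta(Q_e) \;=\; \lim_\F \sum_{F\in\cl(\F)}(-1)^{|F|}\,\psi_\beta\bigl(L_{\vee F}L_{\vee F}^*\bigr).
\]
The explicit computation \eqref{computationabove} in the proof of \thmref{phasetransitionmain} shows that $\psi_\beta(L_pL_p^*) = N(p)^{-\beta}$ for every $p\in P$, so the right-hand side is precisely the clique series $\sum_{F\in\cl(\S)}(-1)^{|F|}N(\vee F)^{-\beta}$ interpreted as in \eqref{eq:clique poly}. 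Combining this with the relation $\psi_\beta(Q_e) = Z_N(\beta)^{-1} = \bigl(\sum_{p\in P}N_p^{-\beta}\bigr)^{-1}$ from \thmref{phasetransitionmain}(2) and clearing denominators produces the inversion formula.

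The only delicate point is the interchange of $\psi_\beta$ with the strong-operator limit defining $Q_e$, needed when $\S$ is infinite because absolute convergence of the clique series is not immediate from the setup. The normality of $\psi_\beta$ disposes of this without fuss, so no genuine obstacle remains; everything else is substitution.
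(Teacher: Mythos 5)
Your argument is correct and is essentially the paper's own proof: evaluate the normal Gibbs state $\psi_\beta$ on the clique expansion of $Q_e$ from \eqref{projectioncliquepoly}, pass through the strong limit of the bounded net of finite partial sums by normality, identify the terms via $\psi_\beta(L_pL_p^*)=N(p)^{-\beta}$, and finish with $Z_N(\beta)\,\psi_\beta(Q_e)=1$ from \thmref{phasetransitionmain}(2). The one point worth keeping in mind, which the paper also flags, is that the resulting clique series is only conditionally convergent, with its value fixed by the convention of summing over cliques of finite subsets of $\S$.
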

\begin{proof}
For each $\beta > \beta_c$,  the generalized Gibbs state $\psi_\beta$ on $\B(\ell^2(P))$
is normal in the left regular representation, and since
the right hand side of \eqref{projectioncliquepoly} is the strong limit of
a bounded net indexed by finite sets, we obtain
\[
    \psi_\beta(Q_e) =  \sum_{F \in \cl(\S)} (-1)^{|F|} N_{\vee F}^{-\beta },
\]
where the right hand side is the limit of sums over
cliques of finite subsets of $\S$, and
is a conditionally convergent numerical
series.
The claim now follows from
\thmref{phasetransitionmain}(2).
\end{proof}
\begin{remark}
The convergence of the clique series in \corref{cor:inversion} is conditional and depends on our convention of
adding in stages over the cliques of finite subsets of $\S$. One could list $\S$, for instance so that $w(s_n)$ is nondecreasing,
and obtain a proper series of partial sums given by initial segments of this list,
but convergence would remain conditional. In general,
the terms cannot  be rearranged to write the clique series with $t = e^{-\beta}$
as a Dirichlet series, but the inversion formula itself shows that the
limit function has a meromorphic extension to the half plane $\Re(s) >\beta_c$,
with poles at the zeros of $Z_N(s)$.
\end{remark}
\begin{remark}
If the clique series converges absolutely on a region $\Re s > \sigma_0$ for some $\sigma_0 <\infty$,
then  \corref{cor:inversion} gives the analytic inversion formula (****) in \cite[Remark 5.5]{S3},
\[
\sum_{p \in P} e^{-s w(p)}  \ C_{\S,w}(e^{-s}) =1, \quad \Re(s) > \max\{\beta_c,\sigma_0\}
\]
in the particular case of $P$ quasi-lattice ordered  with degree function $w = \log N$ and minimal set $\S$.
\end{remark}
For finitely generated $P$  the skew growth series is the clique polynomial,
and we have the following result.
\begin{prop}[cf. Theorem 4.2 of \cite{Mc13} and Example 1 of \cite{S3}] \label{subcriticalKMS}
Resume the hypotheses of \thmref{phasetransitionmain}, and suppose further that
$P$ is finitely generated.
Then
\begin{enumerate}
\item  $e^{-\beta_c}$ is the smallest root of the clique polynomial $ C_{\S,w}(t) $ and  the KMS$_{\beta_c}$ state $\varphi_{\beta_c}$
 vanishes on the compact operators $\K(\l^2(P))$;

\item if $\varphi$ is a KMS$_\beta$ state for some $\beta < \beta_c$,
then $e^{-\beta}$ is a root of $C_{\S,w}$ and $\varphi$  vanishes on the compact operators
$\K(\l^2(P))$.
 \end{enumerate}
\end{prop}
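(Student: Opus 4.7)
The plan hinges on the inversion formula from \corref{cor:inversion} together with the key observation that, when $P$ is finitely generated, $C_{\S,w}(t)$ is a genuine polynomial and the vacuum projection
\[
Q_e = \sum_{F \in \cl(\S)} (-1)^{|F|} L_{\vee F}L_{\vee F}^*
\]
is a \emph{finite} linear combination lying in $C^*_\lambda(P)$, not merely in its von Neumann envelope. By \proref{kmschar} every KMS$_\beta$-state $\varphi$ with $\beta > 0$ satisfies the formula \eqref{KMSformula}, so linearity immediately yields the key identity
\[
\varphi(Q_e) = \sum_{F\in\cl(\S)}(-1)^{|F|}N_{\vee F}^{-\beta} = C_{\S,w}(e^{-\beta}).
\]
Everything else will follow from this identity together with a Cauchy--Schwarz trick.

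For part~(1), the inversion formula yields $C_{\S,w}(e^{-\beta}) = 1/Z_N(\beta) > 0$ for every $\beta > \beta_c$, so $C_{\S,w}$ has no positive real root in $(0,e^{-\beta_c})$. If $C_{\S,w}(e^{-\beta_c}) \neq 0$, continuity of $C_{\S,w}$ and the inversion formula would produce an analytic extension of $Z_N$ to an open neighbourhood of $\beta_c$, contradicting \proref{dirichletseriesprop}(3); hence $e^{-\beta_c}$ is the smallest positive real root. For the compactness claim, the key identity gives $\varphi_{\beta_c}(Q_e) = 0$; then the KMS condition applied to the analytic element $L_p$ gives
\[
\varphi_{\beta_c}(L_p Q_e L_p^*) = N_p^{-\beta_c}\varphi_{\beta_c}(Q_e L_p^* L_p) = N_p^{-\beta_c}\varphi_{\beta_c}(Q_e) = 0,
\]
and the factorisation $L_p Q_e L_q^* = (Q_e L_p^*)^*(Q_e L_q^*)$ with Cauchy--Schwarz for the state $\varphi_{\beta_c}$ yields $\varphi_{\beta_c}(L_p Q_e L_q^*) = 0$ for all $p,q \in P$. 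A direct calculation on basis vectors shows that $L_p Q_e L_q^*$ is precisely the rank-one operator $\inn{\cdot}{\e_q}\e_p$; since these span a norm-dense $^*$-subalgebra of $\K(\ell^2(P))$, $\varphi_{\beta_c}$ annihilates $\K(\ell^2(P))$.

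For part~(2), if $\varphi$ is a KMS$_\beta$-state with $\beta<\beta_c$, then the key identity again gives $\varphi(Q_e) = C_{\S,w}(e^{-\beta})$, and the goal is to show this equals zero. The delicate point I expect to be the main obstacle is that $\varphi$ need not be normal, so we cannot directly appeal to $\sigma$-additivity on the infinite sum $\sum_p L_p Q_e L_p^* = 1$. Instead, I would use the fact that the $L_p Q_e L_p^*$ are pairwise orthogonal rank-one projections bounded above by $1$: for every finite $F \subset P$,
\[
\varphi(Q_e)\sum_{p\in F}N_p^{-\beta} = \sum_{p\in F}\varphi(L_p Q_e L_p^*) \leq \varphi(1) = 1,
\]
where the first equality uses the KMS computation of part~(1). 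If $\varphi(Q_e) > 0$, taking the supremum over finite $F$ would force $Z_N(\beta) \leq 1/\varphi(Q_e) < \infty$, contradicting $\beta<\beta_c$. Hence $\varphi(Q_e) = 0$, showing $e^{-\beta}$ is a root of $C_{\S,w}$, and the Cauchy--Schwarz argument from part~(1) again shows $\varphi$ vanishes on $\K(\ell^2(P))$.
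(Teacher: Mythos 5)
Your proposal is correct and follows essentially the same route as the paper: the identity $\varphi(Q_e)=C_{\S,w}(e^{-\beta})$ coming from the finite clique expansion of $Q_e$, the singularity/inversion-formula argument at $\beta_c$ using \proref{dirichletseriesprop}(3), the finite-sum positivity inequality $\sum_{p\in F}N_p^{-\beta}\varphi(Q_e)\le 1$ in the subcritical case, and the KMS-plus-Cauchy--Schwarz argument showing $\varphi$ kills the rank-one operators $L_pQ_eL_q^*$ and hence $\K(\l^2(P))$. The only cosmetic difference is in the minimality claim: you exclude positive real roots in $(0,e^{-\beta_c})$ via $C_{\S,w}(e^{-\beta})=1/Z_N(\beta)>0$, whereas the paper observes that $C_{\S,w}(e^{-s})=1/Z_N(s)$ is zero-free on all of $\Re s>\beta_c$, which also rules out complex roots of smaller modulus.
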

\begin{proof}
If $C_{\S,w}(e^{-\beta_c}) \neq 0$, then the inversion formula of \corref{cor:inversion}
shows that $Z_N(s)$ has an analytic extension to a neighborhood of $\beta_c$, contradicting
the last assertion of \proref{dirichletseriesprop}.  This shows that $Z_N(s)$ has a pole at $\beta_c$.

 For each $p\in P$, we know $V_pQ_eV_p^*=\theta_{p,p}\in\K(\l^2(P))$, so $I-\sum_{p\in
F}V_pQ_eV_p^*\geq 0$ for each finite subset $F\subseteq P$. If $\varphi$ is a KMS$_\beta$ state, then
\begin{equation}
\label{SeriesInequality}
1 \geq \sum_{p\in F}\varphi(V_pQ_eV_p^*)=\sum_{p\in F}e^{-\beta w(p)}\varphi(Q_e)=\sum_{p\in F}e^{-\beta w(p)}C_{\S,w}(e^{-\beta}).
\end{equation}
For $\beta < \beta_c$, the sum $\sum_{p\in P}e^{-\beta w(p)}$ diverges, and hence
\eqref{SeriesInequality} implies that $e^{-\beta}$ is  a root of the polynomial
$C_{\S,w}(t)$. Hence there are no KMS$_\beta$ states for any $\beta<\beta_c$ such that
$C_{\S,w}(e^{-\beta})\neq 0$.

Now suppose that $e^{-\beta}$ is a root of the clique polynomial and that $\varphi$ is a
KMS$_\beta$ state for $\alpha^N$. Then $\varphi(Q_e) = C_{\S,w}(e^{-\beta})=0$.
Since $Q_e \perp L_p L^*_p$ for all $p \not= e$,
\eqref{eq:spanning set} shows that the ideal $\langle Q_e \rangle$ generated by
$Q_e$ is equal to $\overline{\text{span}}\{V_pQ_eV_q^*: p,q \in P\}$. For $p,q \in P$, the KMS
condition and the Cauchy--Schwartz inequality show that
\[
|\varphi(V_pQ_eV_q^*)|^2
    = e^{-2\beta w(p)}|\varphi(V_q^*V_pQ_e)|^2
    \leq e^{-2\beta w(p)}|\varphi(V_q^*V_pV_p^*V_q)\varphi(Q_e)|
    =0,
\]
so $\varphi$ vanishes on the ideal $\langle Q_e\rangle$. An elementary calculation shows
that for $p,q \in P$ the product $L_p Q_e L^*_q $ is the rank-1 operator $\theta_{p,q}$,
so $\langle Q_e \rangle = \mathcal{K}(\ell^2(P))$.

Since $Z(s)$ has no poles on $\Re(s) > \beta_c$, the function $C_{\S,w}(e^{-s})$ has no
zeros there, hence no  root of $C_{\S,w}$ can have an absolute value smaller than
$e^{-\beta_c}$.
\end{proof}

\begin{remark}
We expect a similar result for infinitely generated quasi-lattice monoids, with the skew growth
function in place of the clique polynomial. However, the situation here is more delicate:
such a result would depend on analytic or meromorphic continuation of the skew growth function
up to the abscissa of convergence of $Z(\beta)$.
\end{remark}

\end{document}